\begin{document}

\newtheorem{problem}{Problem}

\newtheorem{theorem}{Theorem}[section]
\newtheorem{corollary}[theorem]{Corollary}
\newtheorem{definition}[theorem]{Definition}
\newtheorem{conjecture}[theorem]{Conjecture}
\newtheorem{question}[theorem]{Question}
\newtheorem{lemma}[theorem]{Lemma}
\newtheorem{proposition}[theorem]{Proposition}
\newtheorem{quest}[theorem]{Question}
\newtheorem{example}[theorem]{Example}
\newenvironment{proof}{\noindent {\bf
Proof.}}{\rule{2mm}{2mm}\par\medskip}
\newenvironment{proofof}{\noindent {\bf
Proof of the Theorem 6.1.}}{\rule{2mm}{2mm}\par\medskip}
\newcommand{\remark}{\medskip\par\noindent {\bf Remark.~~}}
\newcommand{\pp}{{\it p.}}
\newcommand{\de}{\em}

\title{  {The $Q$-index and connectivity of graphs}\thanks{ This work is supported by the National Natural Science Foundation of China (Nos. 11971311, 
12026230); L. Feng and W. Liu were supported by NSFC (Nos. 11871479, 12071484), Hunan Provincial Natural Science Foundation (2020JJ4675, 2018JJ2479).
E-mail addresses: zpengli@sjtu.edu.cn (P.-L. Zhang), fenglh@163.com (L. Feng),  wjliu6210@126.com(W. Liu), xiaodong@sjtu.edu.cn ($^\dag$X.-D. Zhang, corresponding author).}}

\author{
 Peng-Li Zhang$^a$,  Lihua Feng$^b$, Weijun Liu$^b$, Xiao-Dong Zhang$^a\dagger$ \\
{\small $^a$ School of Mathematical Sciences, Shanghai Jiao Tong University,}\\ {\small 800 Dongchuan Road, Shanghai, 200240, PR China} \\
{\small $^b$ School of Mathematics and Statistics, Central South University,}\\ {\small  New Campus,  Changsha, Hunan, 410083, PR China} \\
}

\date{}

\maketitle

\vspace{-0.5cm}

\begin{abstract}
A connected graph $G$ is said to be $k$-connected  if it has more than $k$ vertices and remains connected whenever fewer than $k$ vertices are deleted. In this paper,  for a connected graph $G$ with sufficiently large order, we present a tight sufficient condition for $G$ with fixed minimum degree to be  $k$-connected based on  the $Q$-index. Our result can be viewed as a spectral counterpart of the corresponding Dirac type condition.
 \end{abstract}

{{\bf Key words:}   $Q$-index; Minimum degree; $k$-connected.

 {\bf AMS subject classifications:}  05C50; 05C40.}

\section{Introduction}
 All graphs considered in this paper are simple connected and undirected. The notations we used are standard. Let $G$ be a simple connected  graph with vertex set $V(G)$ and edge set $E(G)$ such that $|V(G)|=n$ and $|E(G)|=m$. Let $d(v)$ be the degree of a vertex $v$ in $G$, and  the minimum degree  be $\delta(G)=\delta$. For two vertex-disjoint graphs $G$ and $H,$  we denote $G\cup H$ the disjoint union of $G$ and $H,$ $G\vee H$ the join of  $G$ and $H,$ which is a graph obtained by adding all possible edges between $G$ and $H.$  Throughout this paper, we use the symbol $i\sim j$ to denote the vertices $i$ and $j$ are adjacent, and $i\nsim j$ otherwise.



A graph $G$ is said to be \emph{$k$-connected} if it has more than $k$ vertices and remains connected whenever fewer than $k$ vertices are deleted.
In the meantime, a vertex-cut $X$ of $G$ is a subset of $V(G)$ such that $G-X$ is disconnected. The vertex connectivity $\kappa$ is the minimum vertex-cut $X.$ We say  $G$ is $k$-connected when $\kappa \geq k,$ $\kappa=0$ if $G$ is either trivial or disconnected. In other words, $G$ is $k$-connected if the minimum vertex-cut $X$ satisfies $|X|\geq k.$



The \emph{adjacency matrix} of $G$ is $A(G)=(a_{ij})_{n \times n}$ with $a_{ij}=1$ if  $i$ and $j$ are adjacent, and $a_{ij}=0$ otherwise.   The largest eigenvalue of $A(G)$, denoted by $\lambda (G)$, is called the {\it spectral radius} of $G$. The \emph{diagonal matrix} of $G$ is $D(G)=(d_{ii})_{n \times n},$ whose diagonal entries $d_{ii}$ satisfy $d_{ii}=d(i)$. The \emph{signless Laplacian matrix} $Q(G)$ of $G$ is defined as $D(G)+A(G).$ The largest eigenvalue of $Q(G)$, denoted by $q (G)$, is called the  $Q$-\textit{index} (or the signless Laplacian  spectral radius) of $G$.

When one talks about spectral graph theory, perhaps  one of the most well-known problems  is the
Brualdi-Solheid problem \cite{brualdi}:  Given a set ${\cal
{G}}$ of graphs, find a tight upper bound for the spectral radius in
${\cal
{G}}$ and characterize the extremal graphs.
This problem is well studied in the
literature for various classes of graphs, such as graphs with given number of cut vertices or cut edges  \cite {Belardo, Liuruifang},  graphs with given edge chromatic number \cite{FengLAA16}.
For the  $Q$-index counterpart of the above problem, Zhang \cite{Zhangxiaodong} gave  the  $Q$-index of graphs with given degree sequence, Zhou \cite{Zhoubo} studied the $Q$-index and Hamiltonicity.  Also, from both theoretical and practical viewpoint,
the  eigenvalues of graphs have been successfully used in many   other disciplines,  one may
refer to \cite{Huobofeng, LiShiEnergy, Lixueliang13, ZhangMinjieDAM, ZhangMinjieAMC}.

Analogous to the Brualdi--Solheid problem, the following    problem
was proposed \cite{NikiforovLAA10}:
What is the maximum spectral radius of a graph $G$ on $n$
vertices without a subgraph isomorphic to a given graph $F$?
Regarding this problem,
 Fiedler and Nikiforov  \cite{FiedlerNikif} obtained tight sufficient conditions for
graphs to be hamiltonian or traceable. This motivates   further study for such questions. Later, Zhou \cite{Zhoubo} considered  the $Q$-index version of the results in \cite{FiedlerNikif}. For further reading in this topic, see   \cite{FengLAA17, FengMonoshMath, FengDAM17,   Liyawen, Liu,  LiuDMGT, Lumei,    Ningbo15,   ZhouWangligong2, ZhouWangligongAMC18}.

 For the connectivity and eigenvalues of graphs, one must mention the classical result from Fiedler \cite{Fiedler73} which states that the second smallest Laplacian eigenvalue is at most the connectivity for any non-complete graph, which now becomes one of the most attractive research areas. For adjacency eigenvalues,  extending  the  result in \cite{Chandran}, Cioab\v a \cite{Cioaba10LAA} obtained
 \begin{theorem}
 Let $d\geq k\geq 2$. If the second largest eigenvalue $\lambda_2$ of a $d$-regular graph satisfies
$$
\lambda_2<d -\frac{ (k-1)n}{(d+1)(n-d-1)},
$$
then the  edge-connectivity of $G$ is at least $k$.
 \end{theorem}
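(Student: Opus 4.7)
The plan is to prove the contrapositive: assuming the edge connectivity is at most $k-1$, we will produce a lower bound on $\lambda_2$ that contradicts the hypothesis. Let $G$ be $d$-regular on $n$ vertices with edge connectivity $\kappa'(G)\le k-1$. Then there is a partition $V(G)=S\cup\bar S$ with $S,\bar S\neq\emptyset$ and $e(S,\bar S)\le k-1$. Write $s=|S|$ and $\bar s=|\bar S|=n-s$.

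The first main step is the eigenvalue-Rayleigh-quotient step. Since $G$ is $d$-regular, the all-ones vector is a Perron eigenvector for $A(G)$ with eigenvalue $d$, so $\lambda_2$ is the maximum Rayleigh quotient over all vectors orthogonal to it. I would use the test vector $x$ defined by $x_i=\bar s$ for $i\in S$ and $x_i=-s$ for $i\in\bar S$, which satisfies $\sum_i x_i=0$. Direct computation gives
\begin{equation*}
x^T x = s\,\bar s\,n,\qquad x^T A(G)x = d\,s\,\bar s\,n - e(S,\bar S)\,n^2,
\end{equation*}
where the second identity uses $2e(S)+e(S,\bar S)=d s$ and $2e(\bar S)+e(S,\bar S)=d\bar s$. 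Therefore
\begin{equation*}
\lambda_2 \;\ge\; \frac{x^T A(G) x}{x^T x} \;=\; d-\frac{n\cdot e(S,\bar S)}{s\,\bar s},
\end{equation*}
which, combined with $e(S,\bar S)\le k-1$, yields $\lambda_2\ge d-\dfrac{(k-1)n}{s\,\bar s}$.

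The second step is to show $s,\bar s\ge d+1$ so that the denominator $s\bar s$ is at least $(d+1)(n-d-1)$. I would argue that if $s\le d$, then each vertex in $S$ has at most $s-1$ neighbours inside $S$ and hence at least $d-s+1$ neighbours in $\bar S$, giving $e(S,\bar S)\ge s(d-s+1)$. A short calculus/endpoint check on the polynomial $s(d-s+1)$ over $1\le s\le d$ shows its minimum is $d$, so $e(S,\bar S)\ge d>k-1$, a contradiction. By symmetry $\bar s\ge d+1$ as well, and since $s(n-s)$ is concave on $[d+1,n-d-1]$ it is minimized at an endpoint, giving $s\bar s\ge (d+1)(n-d-1)$. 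Substituting yields $\lambda_2\ge d-\dfrac{(k-1)n}{(d+1)(n-d-1)}$, contradicting the hypothesis.

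The step I expect to require the most care is the combinatorial lower bound $s,\bar s\ge d+1$, since the Rayleigh-quotient inequality alone only bounds $\lambda_2$ in terms of the unknown $s\bar s$; one must rule out small parts using the regularity and the assumed edge-cut bound. Everything else is a straightforward Rayleigh-quotient calculation together with the observation that $s(n-s)$ is minimized at the permitted endpoints.
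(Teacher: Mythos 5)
Your proof is correct. Note that the paper does not prove this statement at all --- it is quoted (Theorem 1.1) from Cioab\u a's paper on eigenvalues and edge-connectivity of regular graphs --- and your argument (Rayleigh quotient of the signed indicator vector of a minimum edge cut, orthogonal to the all-ones Perron vector, together with the combinatorial fact that $e(S,\bar S)\le k-1<d$ forces both sides of the cut to have at least $d+1$ vertices) is essentially the standard proof of that cited result; the two computations $x^TAx=d\,s\,\bar s\,n-e(S,\bar S)n^2$ and $\min_{1\le s\le d}s(d-s+1)=d$ both check out.
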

There are also several related results regarding the edge-connectivity and eigenvalues of graphs, which can be found in \cite{GuxiaofengJGT, LiuhuiqingLumei, OSuilSebiSIAM}.


One of the classical problems of graph theory is to obtain sufficient conditions for a graph possessing certain properties. It is known that \cite[Page 4]{Bollobas78}, if $G$ is a simple graph of order $n\geq k+1$, and  if
$$
\delta\geq \frac12(n+k-2),
$$
then $G$ is  $k$-connected. In this paper,
borrowing ideas from \cite{ Liyawen, Nikiforov},  by utilizing   the  $Q$-index, we will establish a new   sufficient   condition  for  graphs with fixed minimum degree to be  $k$-connected, for sufficiently large order (and therefore for relatively small $\delta$).  Such results may be of independent interest.
For any $k>1$ and $n>2k+1,$  let
$${M}_k(n)=K_k\vee(K_{n-2k}\cup \overline{K_k}).$$
For any $k\geq 1$ and $n\geq k+2,$  let
$${L}_k(n)=K_1\vee(K_{n-k-1}\cup  {K_k}).$$
In light of the result of Li and Ning \cite{Libinglong}, Nikiforov \cite{Nikiforov} proved the following theorem.
\begin{theorem}\label{th12}
Let $k>1,$ $n\geq k^3+k+4,$ and let $G$ be a graph of order  $n$  with minimum degree $\delta(G)  \geq k$.
  If
  $$
  \lambda(G)\geq  n-k-1,
  $$
then $G$ has a Hamiltonian cycle unless $G={M}_k(n)$ or $G={L}_k(n).$
\end{theorem}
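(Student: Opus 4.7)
The plan is a proof by contradiction combining the Chv\'atal--Bondy closure technique with a spectral radius estimate. Suppose $G$ is a non-Hamiltonian graph on $n\geq k^{3}+k+4$ vertices with $\delta(G)\geq k$ and $\lambda(G)\geq n-k-1$; the goal is to force $G\in\{M_{k}(n),L_{k}(n)\}$. First I would pass to the Chv\'atal--Bondy closure $H=cl(G)$. Since closure preserves non-Hamiltonicity and only adds edges, $H$ is non-Hamiltonian, $\lambda(H)\geq\lambda(G)\geq n-k-1$, and $\delta(H)\geq k$. Applying Chv\'atal's degree-sequence condition to $H$, if we order its degrees as $d_{1}\leq d_{2}\leq\cdots\leq d_{n}$, there exists an integer $i$ with $k\leq i<n/2$ such that $d_{i}\leq i$ and $d_{n-i}\leq n-i-1$; the bound $i\geq k$ comes from $\delta(H)\geq k$.

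Next I would split into the regimes $i\geq k+1$ and $i=k$. In the first regime, let $A$ be the set of $i$ smallest-degree vertices of $H$. Since every vertex in $A$ has degree at most $i$ and at most $i$ vertices outside $A$ can have degree at least $n-i$, $H$ is a spanning subgraph of $M_{i}(n)=K_{i}\vee(K_{n-2i}\cup\overline{K_{i}})$. Hence $\lambda(H)\leq\lambda(M_{i}(n))$, with equality only if $H=M_{i}(n)$. Computing $\lambda(M_{i}(n))$ from its equitable $3\times 3$ quotient matrix and using elementary estimates shows that for $k+1\leq i<n/2$ and $n\geq k^{3}+k+4$ one has $\lambda(M_{i}(n))<n-k-1$, contradicting the hypothesis. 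This eliminates every $i\geq k+1$.

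The remaining case $i=k$ is where the two extremal graphs appear. Here $\delta(H)\geq k$ forces the $k$ smallest-degree vertices to have degree exactly $k$; call this set $S$, and the condition $d_{n-k}\leq n-k-1$ controls the $k$ remaining vertices of degree less than $n-1$. A case analysis on the induced subgraph $H[S]$, combined again with the spanning-subgraph argument, shows that $H\subseteq M_{k}(n)$ when $S$ is independent and $H\subseteq L_{k}(n)$ when $S$ induces a clique, while mixed configurations drop the spectral radius strictly below $n-k-1$. Equality in $\lambda(H)\leq\lambda(M_{k}(n))$ or $\lambda(H)\leq\lambda(L_{k}(n))$ then forces $H$ to equal the relevant graph; since both $M_{k}(n)$ and $L_{k}(n)$ coincide with their own Chv\'atal--Bondy closures, we conclude $G=H\in\{M_{k}(n),L_{k}(n)\}$.

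The main obstacle is the quantitative spectral estimate $\lambda(M_{i}(n))<n-k-1$ uniformly over the range $k+1\leq i<n/2$: the dominant eigenvalue of the quotient matrix must be compared against the threshold $n-k-1$, and the cubic lower bound $n\geq k^{3}+k+4$ is tailored precisely so that this inequality survives in the worst case $i=k+1$ while improving as $i$ grows.
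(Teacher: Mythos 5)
You should note at the outset that the paper does not prove Theorem~\ref{th12} at all: it is quoted from Nikiforov \cite{Nikiforov} (building on Li--Ning \cite{Libinglong}), so your attempt can only be measured against that known argument. Your skeleton --- pass to the Bondy--Chv\'atal closure $H$, extract from Chv\'atal's theorem an index $i\ge k$ with $d_i\le i$ and $d_{n-i}\le n-i-1$, and then compare spectral radii --- is the right family of ideas, but the central structural step is false as stated. From $d_i\le i$ and $d_{n-i}\le n-i-1$ it does \emph{not} follow that $H$ is a spanning subgraph of $M_i(n)$: the $i$ low-degree vertices may induce edges among themselves and may attach to middle-degree vertices, neither of which occurs in $M_i(n)$. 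Indeed $L_k(n)$ itself is a closed, non-Hamiltonian graph whose Chv\'atal index is $k$ and which is not a subgraph of $M_k(n)$ (its $k$ degree-$k$ vertices form a clique, whereas every degree-$k$ vertex of $M_k(n)$ lies in an independent set), and analogous obstructions exist for every $i$. The actual proof replaces containment by edge counting: a closed non-Hamiltonian graph with this degree pattern satisfies $e(H)\le\binom{n-i}{2}+i^2$, which is played off against a lower bound on $e(H)$ deduced from $\lambda(H)\ge n-k-1$ via a Hong--Shu--Fang/Nikiforov-type inequality for graphs with $\delta\ge k$; this is where $n\ge k^3+k+4$ enters and is what eliminates all $i\ge k+1$. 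The residual case $i=k$ then needs a genuine stability analysis, which you only gesture at.

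The final step is also a non sequitur: from $cl(G)=M_k(n)$ you cannot conclude $G=M_k(n)$ merely because $M_k(n)$ is closed, since many proper spanning subgraphs of $M_k(n)$ have closure equal to $M_k(n)$. Because $\lambda(M_k(n))>n-k-1$ strictly (both extremal graphs contain $K_{n-k}$), one must separately prove that every proper spanning subgraph of $M_k(n)$ or of $L_k(n)$ with minimum degree at least $k$ has spectral radius strictly below $n-k-1$. That perturbation lemma is a substantive piece of work --- it is exactly the analogue of what the present paper labours over in Lemmas~\ref{lemma31}--\ref{lemma38} for the $Q$-index, and it is the reason Theorem~\ref{th13} must admit the whole families $\mathcal{M}_1(n,k)$ and $\mathcal{L}_1(n,k)$ rather than two single graphs --- and it is entirely absent from your proposal.
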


We define
\begin{eqnarray*}
 \mathcal{M}_1(n,k)&=&\left\{G\subseteq {M}_k(n)-E', \mbox{where $ E'\subset E_1({M}_k(n))$ with $|E'|\leq \lfloor\frac{k^2}{4}\rfloor $}\right\},\\
 \mathcal{L}_1(n,k)&=&\left\{G\subseteq {L}_k(n)-E', \mbox{where $ E'\subset E_1({L}_k(n))$ with $|E'|\leq \lfloor\frac{k^2}{4}\rfloor$}\right\}.
\end{eqnarray*}
 Li, Liu and Peng \cite{Liyawen} recently obtained the   $Q$-index counterpart of Theorem \ref{th12}.
\begin{theorem}\label{th13}
Let $k>1,$ $n\geq k^4+k^3+4k^2+k+6.$ Let $G$ be a connected graph with  $n$ vertices and minimum degree $\delta(G)  \geq k$.
  If
  $$
  q(G)\geq  2(n-k-1),
  $$
then $G$ has a Hamilton cycle unless $G\in \mathcal{M}_1(n,k)$ or $G\in \mathcal{L}_1(n,k).$
\end{theorem}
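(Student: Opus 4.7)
I would argue by contraposition: assuming $\delta(G)\geq k$, that $G$ is non-Hamiltonian, and that $G\notin\mathcal{M}_1(n,k)\cup \mathcal{L}_1(n,k)$, the goal is to prove $q(G)<2(n-k-1)$.

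The first step is to pass to the Bondy--Chv\'atal closure $C(G)$. Since adding edges does not decrease the signless Laplacian spectral radius, $q(C(G))\geq q(G)$, and $C(G)$ still has $\delta\geq k$ and is non-Hamiltonian. A Li--Ning / Chv\'atal--Erd\H{o}s style structural theorem (the same machinery underlying Theorem~\ref{th12}) then embeds $C(G)$ into a graph of one of the forms $H_{s,t}^{-}:=K_s\vee(K_{n-s-t}\cup \overline{K_t})$ or $H_{s,t}^{+}:=K_s\vee(K_{n-s-t}\cup K_t)$ with $s,t\geq k$, i.e.\ a closure-closed non-Hamiltonian graph of the prescribed minimum degree.

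Next, I would estimate $q(H_{s,t}^{\pm})$ via its natural equitable tripartition; the $Q$-index is the Perron root of an explicit $3\times 3$ quotient matrix, hence the largest root of a known cubic in $n,k,s,t$. A monotonicity analysis over the feasible $(s,t)$-region should show $q(H_{s,t}^{\pm})\leq 2(n-k-1)$ with equality exactly at $(s,t)=(k,k)$ (yielding $H_{k,k}^{-}=M_k(n)$) and at $(s,t)=(1,k)$ (yielding $H_{1,k}^{+}=L_k(n)$); the hypothesis $n\geq k^4+k^3+4k^2+k+6$ ensures the remaining inequalities are strict for every other admissible $(s,t)$. The families $\mathcal{M}_1,\mathcal{L}_1$ arise because up to $\lfloor k^2/4\rfloor$ edges can be deleted from the distinguished edge set $E_1$ while preserving non-Hamiltonicity of the closure; this Mantel/Tur\'an count is the genuine threshold characterizing deletions that cannot be undone by closure.

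The final step is to connect these extremal templates back to the original $G$. Using the Perron vector $x$ of $Q(G)$ via the eigenvalue identity $q(G)x_v=d(v)x_v+\sum_{u\sim v}x_u$, one performs an edge-switching argument to show that any $G$ strictly outside $\mathcal{M}_1\cup\mathcal{L}_1$ admits a local perturbation strictly increasing $q$, contradicting $q(G)\geq 2(n-k-1)$. The main obstacle is precisely this perturbation estimate: it must be sharp enough to separate genuine members of $\mathcal{M}_1\cup\mathcal{L}_1$ from nearby non-members, and this is exactly where the threshold $n\geq k^4+k^3+4k^2+k+6$ is used --- when $n$ is much larger than any polynomial in $k$, the Perron-vector entries on the small parts of the equitable partition are negligible, and the sign of the first-order perturbation is dictated by the dominant term, ultimately controlled by the count $\lfloor k^2/4\rfloor$ appearing in the definitions of $\mathcal{M}_1$ and $\mathcal{L}_1$.
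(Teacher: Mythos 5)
The paper does not actually prove Theorem \ref{th13}; it is quoted from Li, Liu and Peng \cite{Liyawen}, and what the paper proves in detail is the parallel statement for $k$-connectivity (Theorem \ref{th15}). Measured against that template, your overall architecture --- reduce a non-Hamiltonian graph with large $q$ to a near-extremal structure, then use the Perron vector to decide which edge deletions survive the spectral threshold --- is the right one, and your closure-plus-structure reduction is a legitimate alternative to the paper's route, which instead converts $q(G)\geq 2(n-k-1)$ into an edge count via $q(G)\leq \frac{2m}{n-1}+n-2$ (Lemma \ref{lemma22}) and then applies an Erd\H{o}s/Li--Ning edge-extremal theorem to force $G\subseteq M_k(n)$ or $G\subseteq L_k(n)$ directly.

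Two points, however, are genuine gaps. First, your explanation of the $\lfloor k^2/4\rfloor$ threshold is wrong: every subgraph of $M_k(n)$ (and of $L_k(n)$) is automatically non-Hamiltonian, so ``preserving non-Hamiltonicity of the closure'' places no constraint at all on $|E'|$, and Mantel/Tur\'an plays no role. The threshold is purely spectral: taking $z$ equal to $1$ on the $n-k$ vertices spanning the clique $K_{n-k}\subseteq M_k(n)$ and $0$ on $\overline{K_k}$, one gets $\langle Q(M_k(n)-E')z,z\rangle-\langle Q(K_{n-k}\cup\overline{K_k})z,z\rangle=k^2-4|E'|$, so the Rayleigh quotient stays at least $q(K_{n-k})=2(n-k-1)$ exactly while $|E'|\leq\lfloor k^2/4\rfloor$; this is the analogue of Lemma \ref{lemma31}. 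Second, your closing step is logically backwards: exhibiting a local perturbation that \emph{increases} $q$ cannot contradict the \emph{lower} bound $q(G)\geq 2(n-k-1)$. What is actually needed is the hard direction, the analogue of Lemma \ref{lemma38}: if $|E'|=\lfloor k^2/4\rfloor+1$ then $q<2(n-k-1)$, which requires quantitative control of the Perron entries on the three parts (as in Lemmas \ref{lemma33}--\ref{lemma37}) and is precisely where the hypothesis $n\geq k^4+k^3+4k^2+k+6$ is consumed. As written, the proposal identifies the exceptional families by an incorrect mechanism and omits the estimate that actually separates members of $\mathcal{M}_1(n,k)\cup\mathcal{L}_1(n,k)$ from nearby non-members.
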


For convenience, for the rest of this paper, we denote
$$
{A}(n,k,\delta):=K_{k-1}\vee(K_{\delta-k+2}\cup  {K_{n-\delta-1}}).
$$
Obviously, for any   integers $k>1,\delta\geq 1$ and $n> \delta+1,$  ${A}(n,k,\delta)$ is not $k$-connected.
For the graph ${A}(n,k,\delta),$ let
 $$X:=\{v \in V({A}(n,k,\delta)):d(v)=\delta\}, \qquad Y:=\{v \in V({A}(n,k,\delta)):d(v)=n-1 \},$$
 $$ Z:=\{v \in V({A}(n,k,\delta)):d(v)=n-\delta+k-3\}.$$
Let $E'$ denote the edge set of $E({A}(n,k,\delta))$ whose endpoints are both from $Y\cup Z.$
 We define
$$ \mathcal{A}_1(n,k,\delta):\\
=\left\{G\subseteq {A}(n,k,\delta)-E', \mbox{where $ E'\subset E({A}(n,k,\delta))$ with $|E'|\leq \left\lfloor \frac{ (\delta-k+2)(k-1)}{4}\right\rfloor$}\right\},$$
$$ \mathcal{A}_2(n,k,\delta):=\left\{G\subseteq {A}(n,k,\delta)-E', \mbox{where $ E'\subset E({A}(n,k,\delta))$ with $|E'|= \left\lfloor \frac{ (\delta-k+2)(k-1)}{4}\right\rfloor+1 $}\right\},$$
$$F(k,\delta):=  (k^2+2k-3)\delta^2- (2k^3-k^2-17k+8)\delta+ k^4-3k^3-8k^2+23k+4.$$

In \cite{FengMonoshMath}, using the adjacency spectral radius, it is obtained that
 \begin{theorem}\label{th14}
Let $\delta \geq k \geq 3, n\geq (\delta-k+2)(k^2-2k+4)+3.$ Let $G$ be a connected graph of order  $n$  and  minimum degree $\delta(G) \geq \delta $.
  If
  $$ \lambda(G)\geq n-\delta+k-3,$$
then $G$ is $k$-connected unless $G= {A}(n,k,\delta)$.
\end{theorem}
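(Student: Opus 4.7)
The plan is to argue by contradiction: assume $G$ is not $k$-connected, $\delta(G)\geq\delta$, and $\lambda(G) \geq n - \delta + k - 3$, but $G \neq A(n,k,\delta)$; the goal is to deduce $\lambda(G) < n - \delta + k - 3$. Since $G$ is not $k$-connected, fix a minimum vertex cut $S$ with $|S| = s \leq k - 1$ and let $C_1, \ldots, C_p$ be the components of $G - S$ with $|V(C_i)| = n_i$ and $n_1 \leq \cdots \leq n_p$. The minimum-degree condition applied to a vertex of $C_1$ yields $n_1 \geq \delta - s + 1 \geq \delta - k + 2$. Since $G$ has no edge between distinct components, $G$ is a spanning subgraph of $K_s \vee (K_{n_1} \cup \cdots \cup K_{n_p})$, and merging $C_2, \ldots, C_p$ into a single clique only enlarges the graph, so
\[
\lambda(G) \;\leq\; \lambda(H_{s,a}), \qquad H_{s,a} := K_s \vee (K_a \cup K_{n-s-a}),
\]
for some admissible pair $(s,a)$ with $s \leq k - 1$ and $\delta - s + 1 \leq a \leq (n-s)/2$.

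The first substantive step is a Perron-root computation for $H_{s,a}$ via the equitable partition $\{S, K_a, K_{n-s-a}\}$, whose quotient matrix
\[
B_{s,a} = \begin{pmatrix} s-1 & a & n-s-a \\ s & a-1 & 0 \\ s & 0 & n-s-a-1 \end{pmatrix}
\]
has largest eigenvalue equal to $\lambda(H_{s,a})$. Expanding $P_{s,a}(x) := \det(xI - B_{s,a})$ and substituting the target $x_0 := n - \delta + k - 3$ produces a closed form: for the extremal pair one obtains $P_{k-1,\delta-k+2}(x_0) = -(k-1)^2(\delta-k+2) < 0$, confirming $\lambda(A(n,k,\delta)) > x_0$, while for every other admissible $(s,a)$ the same expansion gives a quadratic in $n$ with positive leading coefficient and obstruction of size $O(k^2 \delta)$. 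The hypothesis $n \geq (\delta-k+2)(k^2 - 2k + 4) + 3$ forces $P_{s,a}(x_0) > 0$, i.e., $\lambda(H_{s,a}) < x_0$ in every non-extremal case. The borderline competitors to be checked carefully are $(s,a) \in \{(k-2, \delta-k+3),\, (k-1, \delta-k+3)\}$; for every other admissible pair the slack is strictly larger, thanks to the monotonicity facts that adding a vertex to $S$ and unbalancing the two non-apex cliques both strictly increase $\lambda$.

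This leaves the sharpness case $(s,a) = (k-1, \delta - k + 2)$ with $G = A(n,k,\delta) - E_0$ for some nonempty edge set $E_0$. The condition $\delta(G) \geq \delta$ forces every edge of $E_0$ to have both endpoints in $Y \cup Z$, since removing an edge incident to $X$ would drop $\delta(G)$ below $\delta$. A Perron-vector perturbation concludes: solving the three quotient eigenvalue equations for $A(n,k,\delta)$ gives Perron coordinates $x_X = \Theta(1/n)\,x_Y$ and $x_Z = (1 + \Theta(1/n))\, x_Y$, whence $\|\mathbf{x}\|_2^2 = (n - \delta + k - 2 + o(1))\, x_Y^2$. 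For any removable edge $uv$, $x_u x_v = (1 + O(1/n))\, x_Y^2$ uniformly in the type Y-Y, Y-Z, or Z-Z, so the Rayleigh-quotient inequality gives
\[
\lambda(A) - \lambda(A - uv) \;\geq\; \frac{2 x_u x_v}{\|\mathbf{x}\|_2^2} \;\approx\; \frac{2}{n - \delta + k - 2},
\]
while a Newton-step at $x_0$ yields the spectral gap $\lambda(A) - x_0 \approx (k-1)^2(\delta - k + 2)/n^2$. The perturbation beats the gap exactly when $2 n^2 > (n - \delta + k - 2)(k-1)^2(\delta - k + 2)$, which the hypothesis $n \geq (\delta-k+2)(k^2 - 2k + 4) + 3$ is engineered to certify. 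Therefore $\lambda(G) \leq \lambda(A - e) < x_0$ for any $e \in E_0$, yielding the desired contradiction.

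The main obstacle is the quantitative matching in the last paragraph: one must control the Perron coordinates of $A(n,k,\delta)$ precisely enough to justify the $\Theta(1/n)$-perturbation versus $\Theta(1/n^2)$-gap comparison uniformly over all three types of removable edges. The structural reduction and the closed-form quotient-matrix calculation in the second paragraph are comparatively routine, but identifying the deletion case $G = A - e$ as the binding constraint is what forces the specific polynomial lower bound on $n$ appearing in the hypothesis.
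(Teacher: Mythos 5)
Your overall architecture (reduce to $K_s\vee(K_a\cup K_{n-s-a})$, compute the quotient-matrix characteristic polynomial at $x_0=n-\delta+k-3$, then kill the near-extremal subgraphs by a perturbation-versus-gap comparison) is a legitimate and genuinely different route from the one this paper takes. Note that the paper does not actually prove Theorem~\ref{th14} --- it quotes it from \cite{FengMonoshMath} --- but its proof of the analogous $Q$-index result (Theorem~\ref{th15}) shows the intended template: convert the spectral hypothesis into an edge count via Lemma~\ref{lemma22}, invoke the purely combinatorial Lemma~\ref{lemma23} to conclude $k$-connectedness unless $G\subseteq A(n,k,\delta)$, and then handle the subgraphs of $A(n,k,\delta)$ by comparing Rayleigh quotients against a reference graph ($K_{n-\delta+k-2}\cup\overline{K_{\delta-k+2}}$) whose index equals the target exactly, with Lemmas~\ref{lemma33}--\ref{lemma37} supplying the needed control on the Perron entries. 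Your quotient-matrix computation replaces the edge-count step and is, where I checked it, correct: for instance $\det(x_0I-B_{k-1,\delta-k+2})=-(k-1)^2(\delta-k+2)$ does come out exactly as you claim, and your order-of-magnitude explanation of why the adjacency version has a single exceptional graph while the $Q$-version (Theorem~\ref{th13}/\ref{th15}) has a whole family $\mathcal{A}_1$ is the right heuristic.

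There is, however, one step that fails as written, and it sits at what you yourself identify as the binding constraint. With $\mathbf{x}$ the Perron vector of $A(n,k,\delta)$, the Rayleigh principle applied to $A-uv$ gives $\lambda(A-uv)\geq \lambda(A)-2x_ux_v/\|\mathbf{x}\|_2^2$, i.e.
$$\lambda(A)-\lambda(A-uv)\;\leq\;\frac{2x_ux_v}{\|\mathbf{x}\|_2^2},$$
which is an \emph{upper} bound on the drop --- the opposite of the inequality you assert and the opposite of what you need (you need the drop to exceed the gap $\lambda(A)-x_0$). The standard repair is to run the test-vector argument in the other direction: either use the Perron vector $\mathbf{y}$ of the \emph{deleted} graph, for which $\lambda(A)\geq\lambda(A-uv)+2y_uy_v/\|\mathbf{y}\|_2^2$ does hold, or (as the paper does for the $Q$-index) compare $\mathbf{y}^TA(G)\mathbf{y}$ directly against $\mathbf{y}^TA(K_{n-\delta+k-2}\cup \overline{K_{\delta-k+2}})\mathbf{y}$ and use that the latter graph has spectral radius exactly $x_0$. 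Either fix forces you to establish the lower bound $y_uy_v\gtrsim 1/n$ for the Perron vector of the perturbed graph, which is precisely the ``max minus min'' control that occupies Lemmas~\ref{lemma33}, \ref{lemma34}, \ref{lemma36} and \ref{lemma37} in the paper's $Q$-index argument; your sketch asserts the analogous estimates for $A(n,k,\delta)$ itself but never transfers them to $A-uv$, where they are actually needed. Until that transfer is carried out, the sharpness case is not closed.
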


 Motivated by Theorem \ref{th13}, as the $Q$-spectral counterpart of Theorem \ref{th14}, we have the following main result of this paper.
\begin{theorem}\label{th15}
Let $G$ be a connected graph of order  $n$  with  minimum degree $\delta(G) = \delta \geq k \geq 3.$
  If $n\geq  F(k,\delta) $and
  $$ q(G)\geq 2(n-\delta+k-3),$$
then $G$ is $k$-connected unless $G\in \mathcal{A}_1(n,k,\delta)$.
\end{theorem}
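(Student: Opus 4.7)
The plan is a proof by contradiction. Suppose $G$ satisfies the hypotheses but is not $k$-connected and $G \notin \mathcal{A}_1(n, k, \delta)$. Then $G$ has a vertex cut $S$ with $s := |S| \leq k - 1$, and the components of $G - S$ can be grouped into two nonempty pieces $V_1, V_2$ with no edges between them and $n_1 := |V_1| \leq |V_2|$. Applying $\delta(G) \geq \delta$ to any vertex of $V_1$ forces $n_1 \geq \delta - s + 1$. Since $G$ is a spanning subgraph of
$$
H(s, n_1) \;:=\; K_s \vee (K_{n_1} \cup K_{n - s - n_1}),
$$
monotonicity of the $Q$-index under edge additions (Perron--Frobenius applied to $Q(G) \leq Q(H)$ entrywise) gives $q(G) \leq q(H(s, n_1))$, so $q(H(s, n_1)) \geq 2(n - \delta + k - 3)$.

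The partition of $H(s, n_1)$ into its three blocks is equitable, hence $q(H(s, n_1))$ is the Perron root of the explicit $3 \times 3$ quotient matrix, i.e.\ the largest root of a cubic $p_{s, n_1}(\lambda)$ whose coefficients are polynomials in $n$, $s$, $n_1$, $\delta$. The heart of the proof is to show that for $n \geq F(k, \delta)$ and every admissible pair $(s, n_1)$ with $1 \leq s \leq k - 1$ and $\delta - s + 1 \leq n_1 \leq \lfloor (n - s)/2 \rfloor$, one has $q(H(s, n_1)) < 2(n - \delta + k - 3)$ unless $(s, n_1) = (k - 1, \delta - k + 2)$. I would carry this out in two monotonicity stages: first fix $s$ and show $p_{s, n_1}\bigl(2(n - \delta + k - 3)\bigr) > 0$ for all $n_1 > \delta - s + 1$ in range, so the maximum of $q(H(s, n_1))$ over $n_1$ is attained at the boundary value $n_1 = \delta - s + 1$; then show that $s \mapsto q(H(s, \delta - s + 1))$ is strictly increasing on $\{1, \dots, k - 1\}$, attaining its maximum at $s = k - 1$. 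Each stage reduces to a sign inequality for the relevant characteristic polynomial at $\lambda = 2(n - \delta + k - 3)$, and the hypothesis $n \geq F(k, \delta)$ is precisely what is needed to make the leading $n$-dependent terms dominate the lower-order contributions. This calculational block is the principal technical obstacle.

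Once the extremal pair $(s, n_1) = (k - 1, \delta - k + 2)$ is forced, $G$ is a spanning subgraph of $A(n, k, \delta) = K_{k - 1} \vee (K_{\delta - k + 2} \cup K_{n - \delta - 1})$. Each vertex of $X$ already has degree exactly $\delta$ in $A(n, k, \delta)$, so the constraint $\delta(G) \geq \delta$ forbids deleting any edge of $A(n, k, \delta)$ incident to $X$; hence the set $F$ of edges of $A(n, k, \delta)$ absent from $G$ satisfies $F \subseteq E'$. To bound $|F|$, let $y$ be the Perron vector of $Q(G)$. The Rayleigh characterization, together with $y^{\top} Q(A(n, k, \delta))\, y \leq q(A(n, k, \delta))\, y^{\top} y$, yields
\begin{equation*}
q(G) \;=\; \frac{y^{\top} Q(A(n, k, \delta))\, y \;-\; \sum_{uv \in F}(y_u + y_v)^2}{y^{\top} y} \;\leq\; q(A(n, k, \delta)) \;-\; \frac{\sum_{uv \in F}(y_u + y_v)^2}{y^{\top} y}.
\end{equation*}
For $n \geq F(k, \delta)$, the equitable structure of $A(n, k, \delta)$ forces $y$ to be nearly constant on each of the classes $Y$ and $Z$, giving a uniform positive lower bound on each summand $(y_u + y_v)^2$ with $uv \in E'$. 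Combining this bound with $q(G) \geq 2(n - \delta + k - 3)$ and the precise value of $q(A(n, k, \delta)) - 2(n - \delta + k - 3)$ produced by the cubic from the first stage yields $|F| \leq \lfloor (\delta - k + 2)(k - 1)/4 \rfloor$. Hence $G \in \mathcal{A}_1(n, k, \delta)$, contradicting our assumption and completing the proof.
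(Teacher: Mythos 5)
Your overall strategy is sound and reaches the same endpoint, but the reduction to the extremal configuration $A(n,k,\delta)$ is carried out by a genuinely different mechanism than the paper's. The paper never touches quotient matrices: it converts the hypothesis $q(G)\geq 2(n-\delta+k-3)$ into a lower bound on the number of edges via the known inequality $q(G)\leq \frac{2m}{n-1}+n-2$ (Lemma 2.2), and then invokes a purely combinatorial lemma (Lemma 2.3) stating that any non-$k$-connected graph with that many edges and minimum degree $\delta$ must be a subgraph of $A(n,k,\delta)$; the case analysis over the cut size $s$ and the small side $n_1$ is done there at the level of the product $|C_1||S|$, which is much lighter than comparing Perron roots of $3\times 3$ quotient matrices. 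Your route is less lossy in principle (no detour through edge counts), but it concentrates all the difficulty in the two monotonicity claims for $q(H(s,n_1))$, each of which requires not just $p_{s,n_1}\bigl(2(n-\delta+k-3)\bigr)>0$ but also an argument that this value lies to the right of the largest root of the cubic; and since $F(k,\delta)$ was tailored to the paper's particular chain of inequalities, you would need to re-verify that the same threshold closes yours. These are real but routine obstacles.

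The one place where your outline has a genuine gap is the final step. You apply the ``nearly constant Perron vector'' estimate to the Perron vector $y$ of $Q(G)$ for an arbitrary spanning subgraph $G$ of $A(n,k,\delta)$ with missing edge set $F\subseteq E'$. But the near-constancy of $y$ on $Y\cup Z$ (the paper's Lemma 3.7) is itself proved using the assumption that only $\left\lfloor\frac{(\delta-k+2)(k-1)}{4}\right\rfloor+1$ edges are missing: the bounds on $d(i)-d(j)$ and on $|N(i)\setminus N[j]|$ there all depend on $|E'|$ being that small. If $|F|$ is large, $y$ can deviate substantially on the heavily pruned vertices, your uniform positive lower bound on $(y_u+y_v)^2$ fails, and the argument becomes circular. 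The fix is the paper's: by monotonicity of the $Q$-index under edge deletion it suffices to rule out $|F|=\left\lfloor\frac{(\delta-k+2)(k-1)}{4}\right\rfloor+1$ exactly, and among those graphs one may choose the one maximizing $q(G)$ (and, secondarily, the number of edges in $G[Y]$) before running the Perron-vector comparison (Lemmas 3.2--3.8). You should add this reduction explicitly; without it the concluding inequality $|F|\leq\left\lfloor\frac{(\delta-k+2)(k-1)}{4}\right\rfloor$ is not established.
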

%

\section{Preliminaries}
In this section, we present some basic notations and lemmas.

Let $x=(x_{1},x_{2},...,x_{n})^T\neq 0,$ by Rayleigh's principle, we have
 $$ q(G)=\max_{x}\frac{\langle Q(G)x,x \rangle}{\langle x,x \rangle}=\max_{x}\frac{ x^{T}Q(G)x }{x^{T}x}.$$
By the definition of $Q(G),$ we   have
$$ \langle Q(G)x,x\rangle= \sum_{i\sim j}(x_{i}+x_{j})^2.$$
If $z$ is the corresponding unit positive eigenvector (usually called Perron vector) of $q(G),$ then
$$
Q(G)z=q(G)z.
$$
According to the Perron-Frobenius theorem,
we have $x_{i}>0$ for each
$i \in V(G)$ if $G$ is connected.
Taking the $i$-th entry of both sides and rearranging terms, we have
\begin{eqnarray} \label{equaion1}
(q(G)-d(i))z_{i}=\sum_{i\sim j}z_{j}.
\end{eqnarray}
Let $N(i)$ denote the set of neighbours of $i,$ $N[i]=N(i)\cup \{i\}$. From the above,  we have the following lemma.
\begin{lemma}\label{lemma21}\cite{Liyawen}
For any $i,j\in V(G),$ we have
\begin{eqnarray} \label{equaion2}
   (q(G)-d(i))(z_{i}-z_{j})=(d(i)-d(j))z_{j}+\sum_{k\in N(i)\setminus N(j)}z_{k}-\sum_{l\in N(j)\setminus N(i)}z_{l}.
\end{eqnarray}
\end{lemma}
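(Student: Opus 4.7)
The plan is to derive the identity directly from the eigenvalue equation~(\ref{equaion1}) by applying it at both vertices $i$ and $j$ and then taking a linear combination. First I would record the two scalar equations $(q(G)-d(i))z_i = \sum_{k \in N(i)} z_k$ and $(q(G)-d(j))z_j = \sum_{l \in N(j)} z_l$, which are just (\ref{equaion1}) with the obvious relabelling and are valid because $z$ is the Perron vector of $Q(G)$.

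Next I would expand the target left-hand side as
$(q(G)-d(i))(z_i - z_j) = (q(G)-d(i))z_i - (q(G)-d(i))z_j$.
The first piece equals $\sum_{k \in N(i)} z_k$ immediately. For the second piece, since $(q(G)-d(i))z_j = (q(G)-d(j))z_j + (d(j)-d(i))z_j$, substituting the eigenvalue equation at $j$ gives $(q(G)-d(i))z_j = \sum_{l \in N(j)} z_l + (d(j)-d(i))z_j$. Subtracting yields the intermediate identity
$(q(G)-d(i))(z_i - z_j) = (d(i)-d(j))z_j + \sum_{k \in N(i)} z_k - \sum_{l \in N(j)} z_l$.

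To convert this to the stated form I would split each neighborhood using the disjoint partitions $N(i) = (N(i) \setminus N(j)) \cup (N(i) \cap N(j))$ and $N(j) = (N(j) \setminus N(i)) \cup (N(i) \cap N(j))$. The contributions from $N(i) \cap N(j)$ appear with opposite signs in $\sum_{k \in N(i)} z_k - \sum_{l \in N(j)} z_l$ and therefore cancel, leaving exactly $\sum_{k \in N(i) \setminus N(j)} z_k - \sum_{l \in N(j) \setminus N(i)} z_l$. Combining this with the intermediate identity produces the claim.

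There is no genuine obstacle here, as the entire argument is a short algebraic manipulation of the eigenvector equation; the only thing requiring care is keeping the sign of $(d(i)-d(j))$ versus $(d(j)-d(i))$ consistent when passing the factor $(q(G)-d(i))$ across $z_j$, and correctly bookkeeping the cancellation of common-neighbor entries of $z$ when moving from full neighborhoods to symmetric differences.
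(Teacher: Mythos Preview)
Your argument is correct and is exactly the derivation the paper has in mind: the lemma is stated immediately after equation~(\ref{equaion1}) with the remark ``From the above, we have the following lemma'' and a citation, so the intended proof is precisely to apply (\ref{equaion1}) at $i$ and at $j$, subtract, and cancel the common-neighbor contributions, as you do.
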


\begin{lemma}\cite{Fenglihua} \label{lemma22}
Let $G$ be a graph of order $n.$ Then
  $$ q(G)\leq \frac{2m}{n-1}+n-2.$$
\end{lemma}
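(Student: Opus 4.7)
The plan is to argue by contradiction: suppose $G$ is not $k$-connected, so there is a vertex cut $S$ with $s := |S| \leq k-1$, and $G - S$ splits into components whose smallest has vertex set $V_1$ of size $n_1$. Since every vertex of $V_1$ has all neighbors in $S \cup V_1$ and degree at least $\delta$, I get the basic inequality $n_1 \geq \delta - s + 1 \geq \delta - k + 2$. Therefore $G$ is a spanning subgraph of
$$G^* := K_s \vee (K_{n_1} \cup K_{n-s-n_1}),$$
so the edge count satisfies
$$m(G) \leq \binom{s}{2} + s(n-s) + \binom{n_1}{2} + \binom{n-s-n_1}{2}.$$
Feeding this bound into Lemma \ref{lemma22} yields an upper bound on $q(G)$ that is an explicit function of $(s, n_1)$.

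The heart of the argument is to show that for fixed $n \geq F(k, \delta)$, this upper bound is strictly less than $2(n - \delta + k - 3)$ unless $(s, n_1) = (k-1, \delta - k + 2)$. Since $\binom{n_1}{2} + \binom{n-s-n_1}{2}$ is convex in $n_1$ on the admissible range $\delta - s + 1 \leq n_1 \leq \lfloor (n-s)/2 \rfloor$, its maximum is at an endpoint; for $n$ sufficiently large this endpoint is $n_1 = \delta - s + 1$. One then checks monotonicity in $s$ (the term $s(n-s) - \binom{s}{2}$ together with the change in the extreme partition term is increasing in $s$ when $n$ is large), forcing $s = k-1$. The polynomial threshold $F(k, \delta)$ is precisely what is needed so that the resulting comparison $\frac{2m^*}{n-1} + n - 2 < 2(n - \delta + k - 3)$ holds in every case except the extremal pair $(s, n_1) = (k-1, \delta-k+2)$, at which point the hypothesis $q(G) \geq 2(n-\delta+k-3)$ would be contradicted.

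Once $(s, n_1) = (k-1, \delta-k+2)$ is forced, $G$ is a spanning subgraph of $A(n,k,\delta)$. Let $E'$ denote the set of non-edges of $G$ inside $A(n,k,\delta)$. The minimum-degree hypothesis $\delta(G) = \delta$ forbids any edge of $E'$ from being incident to the set $X$ of degree-$\delta$ vertices (otherwise some vertex would end up with degree $<\delta$), so $E' \subseteq E(Y \cup Z)$, matching the definition of $\mathcal{A}_1(n,k,\delta)$. To control $|E'|$, I write $m(G) = e(A(n,k,\delta)) - |E'|$ and apply Lemma \ref{lemma22} once more; the gap between $e(A(n,k,\delta))$ and the critical edge count $(n-1)(n-2\delta+2k-4)/2$ produced by the spectral threshold is exactly $\lfloor(\delta-k+2)(k-1)/4\rfloor$ (to leading order, after using $n \geq F(k,\delta)$), which gives $|E'| \leq \lfloor (\delta-k+2)(k-1)/4 \rfloor$ and places $G$ in $\mathcal{A}_1(n,k,\delta)$.

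The main obstacle is the tight bookkeeping in the second paragraph: I must verify that the bound from Lemma \ref{lemma22} really drops below $2(n-\delta+k-3)$ for every admissible pair $(s, n_1) \neq (k-1, \delta-k+2)$, which requires separately handling the ``extreme'' regime $n_1 = \delta - s + 1$ and the ``balanced'' regime $n_1 \approx (n-s)/2$, as well as checking boundary effects when $\delta - k + 2$ is close to $(n-s)/2$; this is the calculation that determines the exact coefficients of $F(k, \delta)$. Step three may also require a refinement using Lemma \ref{lemma21} applied to the Perron vector to pin down the constant $\lfloor(\delta-k+2)(k-1)/4\rfloor$ precisely, in case the crude bound from Lemma \ref{lemma22} is off by a small additive term near the extremal graph.
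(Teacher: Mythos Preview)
Your proposal does not address the stated lemma at all. Lemma~\ref{lemma22} is the inequality
\[
q(G)\leq \frac{2m}{n-1}+n-2,
\]
a known bound on the signless Laplacian spectral radius in terms of $n$ and $m$ alone, and in the paper it carries no proof: it is simply cited from \cite{Fenglihua}. What you have written is instead a sketch of a proof of Theorem~\ref{th15} (the main $k$-connectivity result), in which Lemma~\ref{lemma22} is \emph{used} as an ingredient. Nothing in your argument establishes, or even attempts to establish, the inequality $q(G)\leq \frac{2m}{n-1}+n-2$ itself.

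So as a proof of the statement in question, the proposal is a non sequitur. If your intention was to prove Theorem~\ref{th15}, note that the paper's route is different from the single-step edge-count approach you outline: the paper first uses Lemma~\ref{lemma22} together with Lemma~\ref{lemma23} to force $G\subseteq A(n,k,\delta)$, and then the cutoff $|E'|\le\lfloor(\delta-k+2)(k-1)/4\rfloor$ is \emph{not} obtained from Lemma~\ref{lemma22} (which, as you yourself suspect in your last paragraph, is too crude for this), but from a direct Perron-vector calculation (Lemmas~\ref{lemma33}--\ref{lemma38}) showing that any $G\in\mathcal{A}_2(n,k,\delta)$ satisfies $q(G)<2(n-\delta+k-3)$. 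Your proposed bookkeeping over all pairs $(s,n_1)$ is also more elaborate than what the paper needs, since Lemma~\ref{lemma23} already localizes to $|C_1|=\delta-k+2$ via a short edge-count argument.
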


Using the ideas in \cite{Hongzhenmu}, we obtain
\begin{lemma}\label{lemma23}
Let $G$ be a connected graph of order $n\geq 2\delta-k+5,$ size $m,$ minimum degree $\delta(G) =\delta \geq k \geq 2.$ If
  $$ m > \frac{1}{2}n(n-1)-(\delta-k+3)(n-\delta-2),$$
then $G$ is $k$-connected unless $G$ is a subgraph of ${A}(n,k,\delta).$
\end{lemma}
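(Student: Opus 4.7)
The plan is to prove the contrapositive: assuming $G$ is not $k$-connected, I will show that either $G$ is a subgraph of ${A}(n,k,\delta)$ or the edge count contradicts the hypothesis. Let $S$ be a vertex-cut of $G$ with $|S|=s\le k-1$, and let $C_1,C_2,\ldots$ be the components of $G-S$, with $C_1$ a smallest component of size $n_1$; thus $n_1\le (n-s)/2$. Every vertex $v\in C_1$ has all its neighbors in $(S\cup C_1)\setminus\{v\}$ and satisfies $d(v)\ge\delta$, so
$$n_1\ge \delta-s+1.$$
The assumption $n\ge 2\delta-k+5$ guarantees that the interval $[\delta-s+1,(n-s)/2]$ is nonempty for all $s\le k-1$.

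Next I obtain an upper bound on $m$: the $n_1(n-s-n_1)$ pairs with one endpoint in $C_1$ and the other in $V(G)\setminus(S\cup C_1)$ are all non-edges of $G$, hence
$$m\le \binom{n}{2}-n_1(n-s-n_1).$$
Because $t\mapsto t(n-s-t)$ is increasing on $[0,(n-s)/2]$, replacing $n_1$ by the lower bound $\delta-s+1$ yields
$$m\le \binom{n}{2}-(\delta-s+1)(n-\delta-1).$$

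I then split into three cases. If $s\le k-2$, then $\delta-s+1\ge\delta-k+3$, so
$$(\delta-s+1)(n-\delta-1)\ge (\delta-k+3)(n-\delta-1)>(\delta-k+3)(n-\delta-2),$$
contradicting the hypothesized lower bound on $m$. If $s=k-1$ and $n_1\ge\delta-k+3$, then applying the same substitution to $n_1(n-s-n_1)$ (which is increasing in $n_1$ on the relevant interval) gives $m\le \binom{n}{2}-(\delta-k+3)(n-\delta-2)$, again contradicting the hypothesis. So only $s=k-1$ and $n_1=\delta-k+2$ remains. In this last case every vertex of $C_1$ has degree exactly $\delta$, so it is adjacent to every other vertex of $S\cup C_1$; consequently $C_1$ induces a clique, $S$ is fully joined to $C_1$, and $C_1$ has no edges to the remaining $n-\delta-1$ vertices of $V(G)\setminus(S\cup C_1)$. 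Under the identification $S\leftrightarrow K_{k-1}$, $C_1\leftrightarrow K_{\delta-k+2}$, and $V(G)\setminus(S\cup C_1)\leftrightarrow K_{n-\delta-1}$, every edge of $G$ is then an edge of $K_{k-1}\vee(K_{\delta-k+2}\cup K_{n-\delta-1})={A}(n,k,\delta)$, so $G$ is a subgraph of ${A}(n,k,\delta)$.

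The main obstacle is keeping straight the comparison between $(\delta-k+2)(n-\delta-1)$ and $(\delta-k+3)(n-\delta-2)$: their difference equals $n-2\delta+k-4$, which is positive precisely thanks to $n\ge 2\delta-k+5$. This is exactly what lets the extremal configuration $s=k-1,\ n_1=\delta-k+2$ survive while every other non-$k$-connected configuration violates the edge bound.
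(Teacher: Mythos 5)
Your proof is correct and follows essentially the same route as the paper's: bound the smallest component's size from below via the minimum degree, count the forced non-edges between that component and the rest, use monotonicity of $t\mapsto t(n-s-t)$ to derive a contradiction unless $|S|=k-1$ and $|C_1|=\delta-k+2$, and then read off the structure of ${A}(n,k,\delta)$. The only cosmetic difference is that you split the contradiction into the cases $s\le k-2$ and $s=k-1$ with $n_1\ge\delta-k+3$, which the paper handles in a single step.
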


\begin{proof}
 Suppose  on the contrary that $G$ is not $k$-connected. Let $X$ be a minimum vertex-cut with $1\leq |X| \leq k-1.$  Assume that  $C_1,C_2,...,C_t$ $(t>1)$ are the components of $G-X,$ where $|C_1|\leq |C_2| \leq...\leq |C_t|. $
Clearly,  for  $1\leq i \leq t,$  each vertex in $C_i$  is adjacent to at most $|C_i|-1$ vertices of $C_i$ and $|X|$ vertices of $X.$ Thus
$$\delta|C_i|\leq \sum _{x\in C_i}d(x)\leq (|C_i|-1+|X|)|C_i|,$$
hence  $|C_i|\geq \delta-|X|+1,$ and therefore $\delta-|X|+1 \leq |C_i| \leq n-|X|-(\delta-|X|+1),$ which implies
\begin{eqnarray*} \label{equaion3}
\delta-|X|+1\leq |C_i|\leq n-\delta-1.
\end{eqnarray*}
Let $ S=\cup_{i=2}^{t}C_i.$ Then from above, $\delta-|X|+1\leq |S|\leq n-\delta-1.$ Since $G-X$ is disconnected,  there are no edges between $C_1$ and $S$ in $G,$ we obtain
$$m \leq \frac{1}{2}n(n-1)-|C_1||S|.$$

%

In order to prove  that $G$ is a subgraph of ${A}(n,k,\delta),$ it suffices to show that $|C_1|=\delta-k+2.$

If $|C_1|\geq \delta-k+3,$  as $|C_1|\leq |C_2| \leq...\leq |C_t|$ and $S=\cup_{i=2}^{t}C_i,$  we   have $|C_1| \leq \frac{n-|X|}{2}.$  Therefore for $\delta-k+3 \leq |C_1| \leq \frac{n-|X|}{2},$   we have   $|C_1||S|=|C_1|(n-|X|-|C_1|) \geq (\delta-k+3)(n-|X|-(\delta-k+3))$, the equality  is attained when $|C_1|=\delta-k+3.$  Since $|X|\leq k-1,$
\begin{eqnarray*}
 m &\leq& \frac{1}{2}n(n-1)-|C_1||S| \\
&\leq& \frac{1}{2}n(n-1)-(\delta-k+3)(n-|X|-(\delta-k+3)) \\
&\leq& \frac{1}{2}n(n-1)-(\delta-k+3)(n-\delta-2).
\end{eqnarray*}
From the assumption,
we get a contradiction. Thus  $|C_1|\leq \delta -k +2.$
Combining this with $|C_1|\geq \delta -|X| +1 \geq\delta -k +2,$ we have $|C_1|=\delta-k+2.$

Hence, as the minimum degree of $G$ is $\delta$, we have $|X|=k-1$
and $d_G(i)= \delta $ for each $i\in C_{1},$ therefore each vertex of $C_1$ is adjacent to each vertex of $X.$  We obtain the result.
\end{proof}
\section{Proof of the Main Result}
To prove Theorem \ref{th15},  we still need to prove the following  several lemmas.

\begin{lemma}\label{lemma31}
Assume  $\delta\geq k\geq 3,$ let $G$ be a connected graph of order $n\geq F(k,\delta)$ and  minimum degree $\delta \geq k.$
For each graph $G\in \mathcal{A}_1(n,k,\delta),$
we have $q(G)\geq 2(n-\delta+k-3).$
\end{lemma}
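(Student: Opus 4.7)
\medskip

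\noindent\textbf{Proof proposal.} The plan is to apply Rayleigh's principle with a $0/1$ test vector supported on $Y\cup Z$. Write $G=A(n,k,\delta)-E'$ where $E'$ is a set of at most $\lfloor(\delta-k+2)(k-1)/4\rfloor$ edges with both endpoints in $Y\cup Z$. Three observations drive the argument. First, $Y\cup Z$ induces a clique in $A(n,k,\delta)$ of size $|Y\cup Z|=n-\delta+k-2$, since $Z$ forms $K_{n-\delta-1}$, $Y$ forms $K_{k-1}$, and every edge between them is present via the join. Second, because only edges inside $Y\cup Z$ may be deleted, all $(k-1)(\delta-k+2)$ edges between $Y$ and $X$ survive in $G$, and all edges of $X$ and its incidences to $Y$ are untouched. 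Third, the bound on $|E'|$ is calibrated exactly so that the cost of deleting edges from the clique inside $Y\cup Z$ is balanced by the contribution of the preserved $Y$-$X$ edges.

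Concretely, I would set $x_v=1$ for $v\in Y\cup Z$ and $x_v=0$ for $v\in X$, so that $x^Tx=n-\delta+k-2$. Summing $(x_u+x_v)^2$ over $E(G)$ gives
$$
x^TQ(G)x \;=\; 4\!\left(\binom{n-\delta+k-2}{2}-|E'|\right)+(k-1)(\delta-k+2),
$$
because each preserved edge inside $Y\cup Z$ contributes $4$, each $Y$-$X$ edge contributes $1$, and each edge inside $X$ (as well as any non-existent $Z$-$X$ edge) contributes $0$. Then Rayleigh's principle yields
$$
q(G)\;\ge\;\frac{x^TQ(G)x}{x^Tx}\;=\;2(n-\delta+k-3)+\frac{(k-1)(\delta-k+2)-4|E'|}{n-\delta+k-2}.
$$

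Since $4|E'|\le 4\lfloor(\delta-k+2)(k-1)/4\rfloor\le(\delta-k+2)(k-1)$, the fractional correction is nonnegative, giving $q(G)\ge 2(n-\delta+k-3)$. There is essentially no obstacle here; the only cleverness is choosing the support of the test vector and noticing that the constant $\lfloor(\delta-k+2)(k-1)/4\rfloor$ in the definition of $\mathcal{A}_1(n,k,\delta)$ is precisely what makes the accounting balance. The hypotheses $\delta\ge k\ge 3$ and $n\ge F(k,\delta)$ are not really needed for this lemma; only $n>\delta+1$ is used so that $Y\cup Z$ is nonempty.
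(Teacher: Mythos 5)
Your proposal is correct and follows essentially the same route as the paper: the authors also take the $0/1$ test vector supported on $Y\cup Z$ (viewing it as the Perron vector of $K_{n-\delta+k-2}\cup\overline{K_{\delta-k+2}}$) and observe that the $(k-1)(\delta-k+2)$ surviving $Y$--$X$ edges compensate the $4|E'|$ loss from the deleted clique edges. Your direct evaluation of the Rayleigh quotient, and your remark that only $n>\delta+1$ is really needed here, are accurate but do not change the argument in substance.
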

\begin{proof}
Let $G$ be a graph in $\mathcal{A}_1(n,k,\delta).$  We easily get that $q(K_{n-\delta+k-2}\cup \overline{K_{\delta-k+2}})=2(n-\delta+k-3)$. Now we construct a vector $z,$ where $z_i=1$ if $i\in Y\cup Z,$ $z_j=0$ if $j\in X.$ Obviously $z$ is the corresponding eigenvector to $q(K_{n-\delta+k-2}\cup \overline{K_{\delta-k+2}}).$ Then we obtain
$$\langle Q(G)z,z \rangle-\langle Q(K_{n-\delta+k-2}\cup \overline{K_{\delta-k+2}})z,z\rangle=(\delta-k+2)(k-1)-4|E'|\geq 0.$$
By the Rayleigh's principle, we have $q(G)\geq 2(n-\delta+k-3).$
\end{proof}

\begin{lemma}\label{lemma32}
Assume  $\delta\geq k\geq 3,$ let $G$ be a connected graph of order $n\geq F(k,\delta)$ and  minimum degree $\delta \geq k.$
For each graph $G\in \mathcal{A}_2(n,k,\delta),$
we have $q(G)> 2(n-\delta+k-3)-1.$
\end{lemma}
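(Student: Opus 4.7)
The plan is to argue exactly as in Lemma~\ref{lemma31}: take the test vector $z$ defined by $z_i=1$ for $i\in Y\cup Z$ and $z_j=0$ for $j\in X$, apply Rayleigh's principle to $Q(G)$, and compare with the base graph $H := K_{n-\delta+k-2}\cup \overline{K_{\delta-k+2}}$. The vector $z$ is a $Q$-eigenvector of $H$ associated with $q(H)=2(n-\delta+k-3)$ and satisfies $\langle z,z\rangle = |Y\cup Z| = n-\delta+k-2$, so everything reduces to controlling $\langle Q(G)z,z\rangle - \langle Q(H)z,z\rangle$ with one more unit of care than in Lemma~\ref{lemma31}.

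First, I would repeat the bookkeeping used in Lemma~\ref{lemma31}. Relative to $H$, the graph $G = A(n,k,\delta)-E'$ contains the $(\delta-k+2)(k-1)$ extra edges between $X$ and $Y$ (each contributing $(0+1)^2=1$) and the edges inside $X$ (each contributing $0$), while it is missing the $|E'|$ deleted edges inside $Y\cup Z$ (each contributing $(1+1)^2=4$). This yields the identity
\[
\langle Q(G)z,z\rangle - \langle Q(H)z,z\rangle = (\delta-k+2)(k-1) - 4|E'|.
\]

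Next, write $(\delta-k+2)(k-1) = 4\lfloor(\delta-k+2)(k-1)/4\rfloor + r$ with $r\in\{0,1,2,3\}$. Since $G\in\mathcal{A}_2(n,k,\delta)$ forces $|E'|=\lfloor(\delta-k+2)(k-1)/4\rfloor+1$, the displayed difference equals $r-4\geq -4$. Applying Rayleigh's principle then gives
\[
q(G) \;\geq\; \frac{\langle Q(G)z,z\rangle}{\langle z,z\rangle} \;\geq\; 2(n-\delta+k-3) - \frac{4}{n-\delta+k-2}.
\]

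Finally, the hypothesis $n\geq F(k,\delta)$ trivially forces $n-\delta+k-2>4$, so the right-hand side is strictly greater than $2(n-\delta+k-3)-1$, which is the desired conclusion. I do not anticipate any serious obstacle here; the only thing worth double-checking is the sign bookkeeping for the three categories of edges in $E(G)\triangle E(H)$, which is identical to what is done in Lemma~\ref{lemma31}.
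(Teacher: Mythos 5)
Your proposal is correct and matches the paper's own proof essentially line for line: the same test vector $z$, the same identity $\langle Q(G)z,z\rangle-\langle Q(H)z,z\rangle=(\delta-k+2)(k-1)-4|E'|\geq -4$, and the same Rayleigh-quotient conclusion $q(G)\geq 2(n-\delta+k-3)-4/\|z\|^2>2(n-\delta+k-3)-1$. Your explicit evaluation $\|z\|^2=n-\delta+k-2$ and the remainder argument with $r\in\{0,1,2,3\}$ just make the paper's ``$\geq -4$'' step slightly more transparent.
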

\begin{proof}
 Let $z$ be the  vector defined in Lemma \ref{lemma31}. We have
$$\langle Q(G)z,z \rangle-\langle Q(K_{n-\delta+k-2}\cup \overline{K_{\delta-k+2}})z,z\rangle=(\delta-k+2)(k-1)-4|E'|\geq -4.$$
Similarly, we have $q(G)\geq 2(n-\delta+k-3)-\frac{4}{\|z\|^2}>2(n-\delta+k-3)-1.$
\end{proof}

We put our attention to prove $q(G)<2(n-\delta+k-3)$ for  $G\in \mathcal{A}_2(n,k,\delta)$ in the following.

 Let $G$ be a graph among $\mathcal{A}_2(n,k,\delta)$ with the largest $Q$-index, assume further that the induced subgraph $G[Y]$ contains the largest number of edges.  Then $|Y|=k-1\geq 2,$ i.e., $k\geq 3.$

In the following, let $x$ be the eigenvector corresponding to $q(G).$
Moreover we may assume $\max\limits_{i\in V(G)}x_i=1.$
Following this, we  have
\begin{lemma}\label{lemma33}
Assume $G\in \mathcal{A}_2(n,k,\delta)$ as defined above. For each $i\in X,$ we have $$x_i\leq \frac{k-1}{q(G)-(2\delta-k+1)}.$$
\end{lemma}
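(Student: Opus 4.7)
The plan is to exploit the fact that, although $G \in \mathcal{A}_2(n,k,\delta)$ may differ from $A(n,k,\delta)$ by the deletion of some edges, all such deleted edges have both endpoints in $Y \cup Z$; consequently every vertex $i \in X$ retains its full neighborhood $N(i) = (X \setminus \{i\}) \cup Y$ in $G$, with $|X \setminus \{i\}| = \delta - k + 1$ and $|Y| = k-1$ (matching $d(i)=\delta$). The desired bound will then be extracted from the signless Laplacian eigenvalue equation applied at $i$, after first showing that $x$ is constant on $X$.

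To establish that constancy, I would pick any two distinct vertices $i, i' \in X$. They share the same degree $\delta$, and their neighborhoods satisfy $N(i) \setminus N(i') = \{i'\}$ and $N(i') \setminus N(i) = \{i\}$. Substituting into Lemma \ref{lemma21} yields
$$(q(G) - \delta)(x_i - x_{i'}) = x_{i'} - x_i,$$
so $(q(G) - \delta + 1)(x_i - x_{i'}) = 0$. From Lemma \ref{lemma32} we have $q(G) > 2(n - \delta + k - 3) - 1$, which for $n \geq F(k,\delta)$ is far larger than $\delta - 1$; hence the factor $q(G) - \delta + 1$ is strictly positive and $x_i = x_{i'}$. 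Denote this common value by $x_X$.

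With symmetry on $X$ in hand, applying $(q(G) - d(i))x_i = \sum_{j \sim i} x_j$ at any $i \in X$ gives
$$(q(G) - \delta) x_X = (\delta - k + 1) x_X + \sum_{j \in Y} x_j,$$
which rearranges to $(q(G) - (2\delta - k + 1)) x_X = \sum_{j \in Y} x_j$. The normalization $\max_v x_v = 1$ bounds the right-hand side by $|Y| = k-1$, while the same lower bound on $q(G)$ from Lemma \ref{lemma32} makes the coefficient $q(G) - (2\delta - k + 1)$ positive; dividing through yields the claimed inequality. I do not anticipate a genuine obstacle here, since the only subtle ingredient is the symmetry argument, and that is dispatched immediately once Lemma \ref{lemma21} is applied to the twin pair $i, i' \in X$.
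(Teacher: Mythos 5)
Your proof is correct and follows essentially the same route as the paper: apply the eigenvalue equation at a vertex of $X$, use the constancy of $x$ on $X$, and bound $\sum_{j\in Y}x_j$ by $k-1$ via the normalization. In fact you go a step further than the paper, which merely asserts that $x_i$ is the same for all $i\in X$, whereas you justify it with the twin-vertex computation from Lemma \ref{lemma21}.
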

\begin{proof}
Using equation (\ref{equaion1}) at vertex  $i,$ we have
$$
(q(G)-d(i))x_i=\sum_{j\in X\setminus i}x_j+\sum_{j \in Y}x_j
.
$$
As $d(i)=\delta,$ $x_i$ is the same for all vertex in $X,$  and $\max\limits_{i\in V(G)}x_i=1$,  we have
$$\left(q(G)-\big(\delta+(\delta-k+1)\big)\right)x_i=\sum_{j \in Y}x_j.$$
The proof is completed.
\end{proof}

Now we divide $Y,Z$ into the following two parts, respectively.
$$Y_1=\{i\in Y: d(i)=n-1\},\quad Y_2=\{i\in Y: d(i)\leq n-2\},$$
$$Z_1=\{i\in Z: d(i)=n-\delta+k-3\},\quad Z_2=\{i\in Z: d(i)\leq n-\delta+k-4\}.$$

We first declare the following truth: $Z_1\neq \emptyset $ since $ n-\delta-1 >2\left(\left\lfloor \frac{ (\delta-k+2)(k-1)}{4}\right\rfloor+1\right)+1,$ and $n\geq F(k,\delta).$

We already know the upper bound of $x_i$ for each $i\in X$ and $x_i<1.$ Clearly   $\max\limits_{i\in V(G)}x_{i}=\max\limits_{i\in Y\cup Z}x_{i}.$

We also need   the following several lemmas.

\begin{lemma}\label{lemma34}
Let $G\in \mathcal{A}_2(n,k,\delta)$ as above. If $Y_2\neq \emptyset,$ then we have $x_i> x_j $ for all $i\in Z_1$ and $j\in Y_2.$
\end{lemma}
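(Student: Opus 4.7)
The plan is to use an edge-swapping argument that exploits the double extremality of $G$ in $\mathcal{A}_2(n,k,\delta)$: $G$ maximizes $q$, and among $q$-maximizers it maximizes $|E(G[Y])|$.

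Fix $i \in Z_1$ and $j \in Y_2$. Since $j \in Y_2$, there is an edge $(j, u) \in E'$ with $u \in (Y \cup Z) \setminus \{j\}$. Because $i \in Z_1$, no edge of $E'$ is incident to $i$; in particular $u \neq i$, and the edge $(i, u)$ is present in $G$ (it lies in $E(A(n,k,\delta))$, has both endpoints in $Y \cup Z$, and is not in $E'$). Replacing $(j, u)$ by $(i, u)$ in $E'$ yields a new removal set $E''$ of the same cardinality with endpoints in $Y \cup Z$, so the graph
\[
G' := G + (j, u) - (i, u) = A(n,k,\delta) - E''
\]
also belongs to $\mathcal{A}_2(n,k,\delta)$. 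Letting $x$ be the Perron vector of $Q(G)$, a direct computation gives
\[
x^{T} Q(G') x - x^{T} Q(G) x = (x_j + x_u)^{2} - (x_i + x_u)^{2} = (x_j - x_i)(x_i + x_j + 2 x_u).
\]

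I would then rule out $x_j \geq x_i$ by contradiction. If $x_j > x_i$, the above quantity is strictly positive (as $x > 0$), and by Rayleigh's principle $q(G') > q(G)$, contradicting the $q$-maximality of $G$. Otherwise $x_j = x_i$, forcing $q(G') = q(G)$, and I would split on the location of $u$. If $u \in Y$, the swap adds a $Y$-$Y$ edge of $G'$ while removing a $Y$-$Z$ edge, yielding $|E(G'[Y])| = |E(G[Y])| + 1$, which contradicts the secondary extremality of $G$. If $u \in Z$, the secondary extremality gives no information, and I would instead invoke the equality case of Rayleigh's principle: $x$ must also be a Perron vector of $Q(G')$ for the same eigenvalue, forcing $(Q(G') - Q(G)) x = 0$. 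However, computing row $j$ gives $\big((Q(G') - Q(G)) x\big)_j = x_j + x_u > 0$, the desired contradiction.

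The main obstacle is this last subcase ($u \in Z$ with $x_i = x_j$), where the comparison $|E(G[Y])|$ gives no information since the swap does not touch $Y$-$Y$ edges. It must be handled via the equality case of the Rayleigh principle combined with the strict positivity of the Perron vector, producing a local coordinate contradiction in $(Q(G') - Q(G)) x$.
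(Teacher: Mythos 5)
Your proposal is correct, and its core move is the same as the paper's: rotate one missing edge of $j$ onto $i$ at a common third vertex, compare the quadratic forms $\langle Q(G')x,x\rangle$ and $\langle Q(G)x,x\rangle$, and invoke the double extremality of $G$. The difference is in the case analysis, and here your version is actually more complete than the paper's. The paper simply picks ``$k\in Y$ with $j\nsim k$'' and concludes that $G'[Y]$ gains an edge; this silently assumes that $j\in Y_2$ has a non-neighbour inside $Y$, which need not hold --- all edges of $E'$ incident to $j$ could go to $Z$, in which case the rotated edge is a $Y$--$Z$ edge, $G'[Y]=G[Y]$, and the secondary extremality gives no contradiction when $x_i=x_j$. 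Your treatment of exactly this subcase via the equality case of the Rayleigh principle (if $q(G')=q(G)$ and $x$ attains the maximum for the symmetric matrix $Q(G')$, then $x$ is a Perron vector of $Q(G')$, so $(Q(G')-Q(G))x=0$, yet the $j$-th coordinate equals $x_j+x_u>0$) is valid and closes the gap. All the supporting facts you use check out: since $i\in Z_1$ has full degree in $A(n,k,\delta)$, no edge of $E'$ meets $i$, so $iu\in E(G)$ and the modified removal set $E''$ has the same cardinality with both endpoints of each edge in $Y\cup Z$, keeping $G'\in\mathcal{A}_2(n,k,\delta)$.
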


\begin{proof}
By contradiction,  assume that there exist some $i\in Z_1$ and $j\in Y_2$ such that $x_i\leq x_j.$ For $k\in Y$ and $j\nsim k,$ we define a new graph $G'\in \mathcal{A}_2(n,k,\delta)$ by removing the edge $i k$ and adding a new edge $j k.$
Since
$$
\langle Q(G')x,x\rangle-\langle Q(G)x,x\rangle=(x_j-x_i)(x_i+x_j+2x_k)\geq 0,
$$
we get $q(G')\geq q(G)$ and the induced graph $G'[Y]$ has more edges than $G[Y],$ which contradicts the choice of $G.$ The result follows.
\end{proof}

%
%

\begin{lemma}\label{lemma36}
Assume $G\in \mathcal{A}_2(n,k,\delta)$ as defined above.
\begin{enumerate}[(1)]

\item If $Z_2\neq \emptyset,$ then we have $x_i>x_j $ for all $i\in Z_1$ and $j\in Z_2.$

\item If $Y_1,Y_2\neq \emptyset,$ then we have $x_i>x_j$ for any $i\in Y_1$ and $j\in Y_2.$

\item If $Y_1\neq \emptyset,$ then we have $x_i>x_j$ for any $i\in Y_1$ and $j\in Z_1.$
\end{enumerate}
\end{lemma}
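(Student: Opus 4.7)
The plan is to deduce all three inequalities from the neighbour-difference identity in Lemma~\ref{lemma21}, exploiting the rigid structure of $G\in\mathcal{A}_2(n,k,\delta)$: since $E'$ only deletes edges inside $Y\cup Z$, a vertex of $Y_1\cup Z_1$ keeps its full $A(n,k,\delta)$-neighbourhood, while a vertex of $Y_2\cup Z_2$ has lost at least one such edge. In each case I would write down $N(i)\setminus N(j)$ and $N(j)\setminus N(i)$ exactly, substitute into equation~(\ref{equaion2}), and shift the contributions of $x_j\in N(i)\setminus N(j)$ and $x_i\in N(j)\setminus N(i)$ across to factor $x_i-x_j$ out of the left-hand side.

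For part (1), the fact that $i\in Z_1$ is adjacent to every other vertex of $Z$ and to every vertex of $Y$ gives $i\sim j$, hence $N(j)\setminus N(i)=\{i\}$ and $N(i)\setminus N(j)=\{j\}\cup S$, where $S$ collects the non-neighbours of $j$ in $(Y\cup Z)\setminus\{i,j\}$. Lemma~\ref{lemma21} then rearranges into
\[
\bigl(q(G)-d(i)+1\bigr)(x_i-x_j)=\bigl(d(i)-d(j)\bigr)x_j+\sum_{v\in S}x_v,
\]
with $d(i)=n-\delta+k-3$. The right-hand side is strictly positive because $d(i)>d(j)$ and $x>0$, while Lemma~\ref{lemma32} together with $n\geq F(k,\delta)$ makes the coefficient $q(G)-d(i)+1$ strictly positive, so $x_i>x_j$. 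Parts (2) and (3) follow the same template. In (2), $i\in Y_1$ has $N(i)=V(G)\setminus\{i\}$, yielding $N(j)\setminus N(i)=\{i\}$ and $N(i)\setminus N(j)=\{j\}\cup T$ with $T$ the non-neighbours of $j$ in $V(G)\setminus\{i,j\}$, and the coefficient of $x_i-x_j$ becomes $q(G)-n+2$. In (3), $i\in Y_1$ paired with $j\in Z_1$ gives $N(i)\setminus N(j)=X\cup\{j\}$ and $N(j)\setminus N(i)=\{i\}$, producing
\[
(q(G)-n+2)(x_i-x_j)=(\delta-k+2)x_j+\sum_{v\in X}x_v,
\]
again with a strictly positive right-hand side.

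The main obstacle is combinatorial bookkeeping rather than algebra: one must pin down the symmetric differences $N(i)\setminus N(j)$ and $N(j)\setminus N(i)$ precisely and, in particular, verify that $i\sim j$ for every pair considered (automatic because $i\in Y_1\cup Z_1$ has lost no $A(n,k,\delta)$-edge, so the missing edges incident to $j$ cannot land on $i$). Once the neighbourhoods are nailed down, all three statements reduce to the same pattern: the identity produces a strictly positive right-hand side, and the bound $q(G)>2(n-\delta+k-3)-1$ from Lemma~\ref{lemma32}, combined with $n\geq F(k,\delta)$, produces a strictly positive coefficient on the left, giving $x_i>x_j$ in each case.
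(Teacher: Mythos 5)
Your proposal is correct and follows essentially the same route as the paper: apply Lemma~\ref{lemma21}, use $N(j)\setminus\{i\}\subset N(i)\setminus\{j\}$ to rearrange into $(q(G)-d(i)+1)(x_i-x_j)=(d(i)-d(j))x_j+\sum_{v\in N(i)\setminus N[j]}x_v$, and conclude from $d(i)>d(j)$ and the positivity of the Perron vector. You are in fact slightly more careful than the paper in verifying $i\sim j$ and the positivity of the coefficient $q(G)-d(i)+1$.
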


\begin{proof}
(1). Using Lemma \ref{lemma21}, we have
$$(q(G)-d(i))(x_{i}-x_{j})=(d(i)-d(j))x_{j}+\sum_{k\in N(i)\setminus N(j)}x_{k}-\sum_{l\in N(j)\setminus N(i)}x_{l}.$$
For each $i\in Z_1$ and $j\in Z_2 ,$ note that $N(j)\setminus \{i\}\subset N(i)\setminus \{j\}.$ Rearranging the last equation, we obtain
\begin{eqnarray}\label{equaion4}
(q(G)-d(i)+1)(x_{i}-x_{j})=(d(i)-d(j))x_{j}+\sum_{k\in N(i)\setminus N[j]}x_{k}.
\end{eqnarray}
As $d(i)>d(j),$ the proof is completed.

(2). If $Y_1,Y_2\neq \emptyset,$ using Lemma \ref{lemma21}, we have
$$(q(G)-d(i))(x_{i}-x_{j})=(d(i)-d(j))x_{j}+\sum_{k\in N(i)\setminus N(j)}x_{k}-\sum_{l\in N(j)\setminus N(i)}x_{l}.$$
For each $i\in Y_1$ and $j\in Y_2 ,$ note that $N(j)\setminus \{i\}\subset N(i)\setminus \{j\}.$  Rearranging the last equation, we obtain
\begin{eqnarray*}\label{equaion5}
(q(G)-d(i)+1)(x_{i}-x_{j})=(d(i)-d(j))x_{j}+\sum_{k\in N(i)\setminus N[j]}x_{k}.
\end{eqnarray*}
as $d(i)>d(j),$ the proof is completed.

(3).
If $Y_1\neq \emptyset,$ applying Lemma \ref{lemma21}, we have
$$(q(G)-d(i))(x_{i}-x_{j})=(d(i)-d(j))x_{j}+\sum_{k\in N(i)\setminus N(j)}x_{k}-\sum_{l\in N(j)\setminus N(i)}x_{l}.$$
For each $i\in Y_1$ and $j\in Z_1 ,$ note that $N(j)\setminus \{i\}\subset N(i)\setminus \{j\}.$ Rearranging the last equation, we obtain
\begin{eqnarray*}\label{equaion6}
(q(G)-d(i)+1)(x_{i}-x_{j})=(d(i)-d(j))x_{j}+\sum_{k\in N(i)\setminus N[j]}x_{k}.
\end{eqnarray*}
as $d(i)>d(j),$ the proof is completed.
\end{proof}
From above,   bearing in mind that $Z_1\neq \emptyset,$   we have

\begin{enumerate}[(1)]

\item If $Y_1= \emptyset,$ obviously we have $Y_2\neq \emptyset$ since $G$ is a connected graph. We divide it into two subcases:

If $Z_2= \emptyset$, from above we know $Z_1\neq \emptyset.$  We easily have that $\max\limits_{i\in V(G)}x_i=\max\limits_{i\in Z_1}x_i$ from Lemma \ref{lemma34};

If $Z_2\neq \emptyset$, we already know that $Z_1\neq \emptyset,$ so we still have $\max\limits_{i\in V(G)}x_i=\max\limits_{i\in Z_1}x_i$ from Lemmas \ref{lemma34} and \ref{lemma36}.

\item If $Y_1\neq \emptyset,$  we divide it into two subcases:

If $Y_2=\emptyset$, we then have $Z_2\neq \emptyset$ for $G\in \mathcal{A}_2(n,k,\delta).$ And $Z_1\neq \emptyset,$ then we   get that $\max\limits_{i\in V(G)}x_i=\max\limits_{i\in Y_1}x_i$ from Lemma \ref{lemma36};

If $Y_2\neq \emptyset$, if $Z_2= \emptyset,$  we  have $\max\limits_{i\in V(G)}x_i=\max\limits_{i\in Y_1}x_i$ from Lemmas \ref{lemma34} and \ref{lemma36}.
 If $Z_2\neq \emptyset,$  we still have $\max\limits_{i\in V(G)}x_i=\max\limits_{i\in Y_1}x_i$ from Lemmas \ref{lemma34} and \ref{lemma36}.
\end{enumerate}
The key step for proving Lemma \ref{lemma38} is to show Lemma \ref{lemma37}.
\begin{lemma}\label{lemma37}
Assume $G\in \mathcal{A}_2(n,k,\delta)$ as defined above. We have
$$\max\limits_{i\in V(G)}x_i-\min\limits_{j\in Y\cup Z}x_j \leq \frac{(\delta-k+2)(k+3)+4}{2(q(G)-n+1)}.$$
\end{lemma}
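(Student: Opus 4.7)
The plan is to apply Lemma \ref{lemma21} with $i = i^\ast$ a vertex where $x$ attains its maximum on $V(G)$ and $j = j^\ast$ a vertex where $x$ attains its minimum on $Y \cup Z$. By the case analysis preceding this lemma $i^\ast \in Y_1 \cup Z_1$, and by Lemmas \ref{lemma34} and \ref{lemma36} (together with the fact that $|E'|\ge 1$ forces $Y_2 \cup Z_2 \neq \emptyset$) the minimum is attained in $Y_2 \cup Z_2$. Lemma \ref{lemma21} gives
$$
(q(G) - d(i^\ast))(x_{i^\ast} - x_{j^\ast}) = (d(i^\ast) - d(j^\ast))x_{j^\ast} + \sum_{k \in N(i^\ast) \setminus N(j^\ast)} x_k - \sum_{l \in N(j^\ast) \setminus N(i^\ast)} x_l,
$$
and since $d(i^\ast) \leq n - 1$ one has $q(G) - d(i^\ast) \geq q(G) - n + 1 > 0$. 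Hence it suffices to bound the right-hand side by $\tfrac{1}{2}\bigl((\delta - k + 2)(k + 3) + 4\bigr)$.

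The crucial structural input is that every edge of $E'$ has both endpoints in $Y \cup Z$, and that $i^\ast \in Y_1 \cup Z_1$ is incident to no edge of $E'$. Consequently $N(i^\ast)$ in $G$ coincides with $N_{{A}(n,k,\delta)}(i^\ast)$, while for $j^\ast$ the set $B_{j^\ast} := N_{{A}(n,k,\delta)}(j^\ast) \setminus N(j^\ast)$ lies in $Y \cup Z$ with cardinality $e_{j^\ast} \leq |E'| \leq \lfloor (\delta - k + 2)(k - 1)/4 \rfloor + 1$. Combined with $x_v \leq 1$ on $Y \cup Z$ and the bound $x_v \leq c_X := (k - 1)/(q(G) - (2\delta - k + 1))$ on $X$ from Lemma \ref{lemma33}, this controls every summand on the right-hand side.

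We then proceed by case analysis on the locations of $i^\ast \in Y_1 \cup Z_1$ and $j^\ast \in Y_2 \cup Z_2$; there are four subcases. The arithmetically tightest is $i^\ast \in Y_1$, $j^\ast \in Z_2$: here $N(i^\ast) = V(G) \setminus \{i^\ast\}$, so $N(i^\ast) \setminus N(j^\ast) = X \cup \{j^\ast\} \cup B_{j^\ast}$, $N(j^\ast) \setminus N(i^\ast) = \{i^\ast\}$, and $d(i^\ast) - d(j^\ast) = \delta - k + 2 + e_{j^\ast}$. Substituting and applying the bounds on $x$ yields
$$
(q(G) - (n - 1))(x_{i^\ast} - x_{j^\ast}) \leq (\delta - k + 3) + 2e_{j^\ast} + (\delta - k + 2)\,c_X,
$$
and this is at most $\tfrac{1}{2}\bigl((\delta - k + 2)(k + 3) + 4\bigr)$ as soon as $(\delta - k + 2)(1 - c_X) \geq 1$. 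Lemma \ref{lemma32} forces $q(G)$ to be of order $2n$, so $c_X = O(1/n)$ is negligible whenever $n \geq F(k, \delta)$, and the inequality holds. The three remaining cases ($i^\ast \in Y_1$, $j^\ast \in Y_2$; $i^\ast \in Z_1$, $j^\ast \in Y_2$; $i^\ast \in Z_1$, $j^\ast \in Z_2$) give a strictly smaller right-hand side because either $X$ is absent from $N(i^\ast) \setminus N(j^\ast)$ or $|B_{j^\ast}|$ contributes less; the computation in each is analogous.

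The principal difficulty is the arithmetic of the critical case: the edge-budget $|E'| \leq \lfloor (\delta - k + 2)(k - 1)/4 \rfloor + 1$ matches the target coefficient $(\delta - k + 2)(k + 3) + 4$ with only a slim margin of $\delta - k + 1$ to absorb the perturbation $(\delta - k + 2)\,c_X$ coming from the $X$-vertices; this is precisely where the hypothesis $n \geq F(k, \delta)$ enters in an essential way.
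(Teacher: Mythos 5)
Your proposal is correct and follows essentially the same route as the paper: the same eigenvector-difference identity from Lemma \ref{lemma21}, the same four-way case split on the locations of the extremal vertices, and the same accounting of the edge budget $|E'|$, with the case $i^\ast\in Y_1$, $j^\ast\in Z_2$ emerging as critical in both arguments. The only cosmetic differences are that you invoke Lemma \ref{lemma33} to bound the $X$-coordinates (the paper simply uses $x_v\le 1$ there and absorbs the slack into the denominator $q(G)-d(i)+1$ versus $q(G)-n+1$), and that you use the ordering lemmas to place $j^\ast$ in $Y_2\cup Z_2$ where the paper also checks $j\in Z_1$ explicitly.
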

\begin{proof}
We distinguish the proof into two cases.

\textbf{Case 1}:
    $Y_1=\emptyset.$ Notice that $\max\limits_{i\in V(G)}x_i $ is attained at the vertices in $Z_1$. For each $ i \in Z_1,$ $d(i)=n-\delta+k-3,$ hence the vertex  $i$ is adjacent to all other vertices in $Y\cup Z.$
\medskip

\textbf{Subcase 1.1}:
    If $j\in Z_2,$ we have $N(i)\setminus N[j]=\{k:k\in Y\cup Z$ and $k\nsim j\}.$ Thus we have
    $ d(i)-d(j)\leq \left \lfloor \frac{(\delta-k+2)(k-1)}{4} \right \rfloor+1 $ and $|N(i)\setminus N[j]|\leq \left\lfloor \frac{(\delta-k+2)(k-1)}{4} \right \rfloor+1.$
    Note that $N(j)\setminus \{i\}\subset N(i)\setminus \{j\},$
    applying equation (\ref{equaion4}), we obtain
    \begin{eqnarray*}
    (q(G)-d(i)+1)(x_{i}-x_{j})&=&(d(i)-d(j))x_{j}+\sum_{k\in N(i)\setminus N[j]}x_{k} \\
    &\leq & \left\lfloor \frac{(\delta-k+2)(k-1)}{2} \right\rfloor+2.
    \end{eqnarray*}
    Since $i\in Z_1, \delta \geq k,$ we  have
 $$
    x_{i}-x_{j}  \leq  \frac{\frac{(\delta-k+2)(k-1)}{2}+2}{q(G)-(n-\delta+k-3)+1}
     = \frac{(\delta-k+2)(k-1)+4}{2(q(G)-n+\delta-k+4)}
    < \frac{(\delta-k+2)(k+3)+4}{2(q(G)-n+1)}.
$$

\textbf{Subcase 1.2}:
   If $j\in Y_2,$ we have $N(i)\setminus (N[j])=\{k:k\in Y\cup Z $ and $ k\nsim j\},$ and $N(j)\setminus N[i]=X.$  Thus
   $|N(i)\setminus N[j]|\leq \left\lfloor \frac{(\delta-k+2)(k-1)}{4} \right\rfloor+1.$  Meanwhile, note that $|d(i)-d(j)|\leq \left\lfloor \frac{(\delta-k+2)(k-1)}{4} \right\rfloor+\delta-k+3.$
    Similarly, we obtain
    \begin{eqnarray*}
    (q(G)-d(i)+1)(x_{i}-x_{j})&=&(d(i)-d(j))x_{j}+\sum_{k\in N(i)\setminus N[j]}x_{k}-\sum_{l\in X}x_{l} \\
    &\leq & (d(i)-d(j))x_{j}+\sum_{k\in N(i)\setminus N[j]}x_{k} \\
    &\leq &  \frac{(\delta-k+2)(k-1)}{2}+\delta-k+2+2 \\
    &=& \frac{(\delta-k+2)(k+1)+4}{2}.
    \end{eqnarray*}

    Since $i \in Z_1, \delta \geq k,$ we easily have
    \begin{eqnarray*}
    x_{i}-x_{j} & \leq & \frac{\frac{(\delta-k+2)(k+1)}{2}+2}{q(G)-(n-\delta+k-3)+1}
    = \frac{(\delta-k+2)(k+1)+4}{2(q(G)-n+\delta-k+4)} \\
    &<& \frac{(\delta-k+2)(k+3)+4}{2(q(G)-n+1)}.
    \end{eqnarray*}

\textbf{Case 2}:
    $Y_1\neq\emptyset.$ Notice that $\max\limits_{i\in V(G)}x_i $ is attained at  the vertices in $Y_1$. When $ i \in Y_1,$ $d(i)=n-1,$ hence the vertex  $i$ is adjacent to all other vertices in $V(G).$
\medskip

\textbf{Subcase 2.1}:
    If $j\in Y_2,$ we have $N(i)\setminus N[j]=\{k:k\in Y\cup Z \mbox{ and } k\nsim j\}.$ Thus we have
    $ d(i)-d(j)\leq \left\lfloor \frac{(\delta-k+2)(k-1)}{4} \right\rfloor+1 $ and $|N(i)\setminus N[j]|\leq \left\lfloor \frac{(\delta-k+2)(k-1)}{4} \right\rfloor+1.$
    Note that $N(j)\setminus \{i\}\subset N(i)\setminus \{j\},$
    applying equation (\ref{equaion4}), we obtain
    \begin{eqnarray*}
    (q(G)-d(i)+1)(x_{i}-x_{j})&=&(d(i)-d(j))x_{j}+\sum_{k\in N(i)\setminus N[j]}x_{k} \\
    &\leq & \left\lfloor \frac{(\delta-k+2)(k-1)}{2} \right\rfloor+2.
    \end{eqnarray*}
    Since $i\in Y_1, \delta \geq k,$ we easily have
    \begin{eqnarray*}
    x_{i}-x_{j}  &\leq&  \frac{\frac{(\delta-k+2)(k-1)}{2}+2}{q(G)-(n-1)+1}
    = \frac{(\delta-k+2)(k-1)+4}{2(q(G)-n+2)} \\
    &<& \frac{(\delta-k+2)(k+3)+4}{2(q(G)-n+1)}.
    \end{eqnarray*}

\textbf{Subcase 2.2}:
   If $j\in Z_1,$ we have $N(i)\setminus N[j]=X-\{j\},$ and $N(j)\setminus N[i]=\emptyset.$  Thus
   $|N(i)\setminus N[j]|= (n-1)-(n-\delta+k-3+1)=\delta-k+1.$  Note that $d(i)-d(j)= \delta-k+2$,
   we similarly obtain
    \begin{eqnarray*}
    (q(G)-d(i)+1)(x_{i}-x_{j})&=&(d(i)-d(j))x_{j}+\sum_{k\in N(i)\setminus N[j]}x_{k}
    \\
    &\leq &  \delta-k+2+\delta-k+1 \\
    &< & 2(\delta-k+2).
    \end{eqnarray*}
    Since $i \in Y_1, \delta \geq k,$ we   have
    \begin{eqnarray*}
    x_{i}-x_{j}  <  \frac{2(\delta-k+2)}{q(G)-(n-1)+1} &<& \frac{4(\delta-k+2)}{2(q(G)-n+1)} <  \frac{(\delta-k+2)(k+3)+4}{2(q(G)-n+1)}.
    \end{eqnarray*}

\textbf{Subcase 2.3}:
   If $j\in Z_2,$ we have $N(i)\setminus N[j]=\{k:k\in X\cup Y\cup Z $ and $ k\nsim j\},$ and $N(j)\setminus N[i]=\emptyset.$  Thus we have
   $|N(i)\setminus N[j]|\leq \delta-k+2+ \left\lfloor \frac{(\delta-k+2)(k-1)}{4} \right \rfloor+1.$  Meanwhile, note that $d(i)-d(j)\leq \left\lfloor \frac{(\delta-k+2)(k-1)}{4} \right\rfloor+\delta-k+3.$
    Similarly, we obtain
    \begin{eqnarray*}
    (q(G)-d(i)+1)(x_{i}-x_{j})&=&(d(i)-d(j))x_{j}+\sum_{k\in N(i)\setminus N[j]}x_{k}
    \\
    &\leq &  \frac{(\delta-k+2)(k-1)}{2}+2(\delta-k+2)+2 \\
    &=& \frac{(\delta-k+2)(k+3)+4}{2}.
    \end{eqnarray*}
    Since $i \in Y_1, \delta \geq k,$ we   have
    \begin{eqnarray*}
    x_{i}-x_{j}  \leq  \frac{\frac{(\delta-k+2)(k+3)}{2}+2}{q(G)-(n-1)+1} &=& \frac{(\delta-k+2)(k+3)+4}{2(q(G)-n+2)}
    < \frac{(\delta-k+2)(k+3)+4}{2(q(G)-n+1)}.
    \end{eqnarray*}
The proof is completed.
\end{proof}

Now we can prove Lemma \ref{lemma38}, which is crucial for Theorem \ref{th15}.

\begin{lemma}\label{lemma38}
Let $G$ be a connected graph of order $n\geq F(k,\delta)$ and  minimum degree $\delta \geq k \geq 3.$
For each graph $G\in \mathcal{A}_2(n,k,\delta),$
we have $q(G)< 2(n-\delta+k-3).$
\end{lemma}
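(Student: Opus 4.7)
The plan is a contradiction argument: suppose $q(G) \geq 2(n-\delta+k-3)$ and derive an upper bound on $|E'|$ that is violated by the hypothesis $|E'| = \lfloor(\delta-k+2)(k-1)/4\rfloor + 1$. Let $x$ be the Perron vector of $Q(G)$ normalized so that $\max_{i\in V(G)} x_i = 1$, put $S_X=\sum_{i\in X}x_i$, $S_Y=\sum_{i\in Y}x_i$, $S_Z=\sum_{i\in Z}x_i$ and $S_{YZ}=S_Y+S_Z$, and for $v\in Y\cup Z$ let $e(v)$ be the number of edges of $E'$ incident to $v$ (so $e(v)=0$ for $v\in X$ by the definition of $E'$). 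The key quantity is $T = \sum_{v\in Y\cup Z} e(v) x_v = \sum_{ij\in E'}(x_i+x_j)$.

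The central step is to sum the eigenvalue equation $(Q(G)x)_v = q(G) x_v$ over all $v\in Y\cup Z$. Using $d(v) = n-1-e(v)$ for $v\in Y$ and $d(v) = n-\delta+k-3-e(v)$ for $v\in Z$, and interchanging the order of summation in $\sum_{v\in Y\cup Z}\sum_{u\sim v} x_u$ (each $u\in X$ has exactly $k-1$ neighbors in $Y\cup Z$, and each $u\in Y\cup Z$ has exactly $n-\delta+k-3-e(u)$ neighbors in $Y\cup Z$), a short rearrangement yields
\begin{equation*}
\bigl(q(G) - 2(n-\delta+k-3)\bigr)\,S_{YZ} \;=\; (\delta-k+2)\,S_Y + (k-1)\,S_X - 2T.
\end{equation*}
Since $S_{YZ}>0$, the contradictory assumption $q(G)\geq 2(n-\delta+k-3)$ forces $(\delta-k+2)\,S_Y + (k-1)\,S_X \geq 2T$.

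To reach a contradiction we bound each piece. The trivial bound $x_i\leq 1$ gives $S_Y\leq k-1$; Lemma \ref{lemma33} yields $S_X\leq (\delta-k+2)(k-1)/(q(G)-2\delta+k-1)$; and Lemma \ref{lemma37}, applied with $\max_i x_i=1$, gives $x_i\geq 1-\eta$ for every $i\in Y\cup Z$, where $\eta=\bigl[(\delta-k+2)(k+3)+4\bigr]/\bigl[2(q(G)-n+1)\bigr]$, so that $T\geq 2|E'|(1-\eta)$. Substituting yields
\begin{equation*}
|E'| \;\leq\; \frac{(\delta-k+2)(k-1)}{4(1-\eta)}\left(1 + \frac{k-1}{q(G)-2\delta+k-1}\right).
\end{equation*}
Because $q(G)\geq 2(n-\delta+k-3)$, both $\eta$ and $(k-1)/(q(G)-2\delta+k-1)$ are of order $O(1/n)$, and the hypothesis $n\geq F(k,\delta)$ is tailored so that the right-hand side above is strictly less than $\lfloor(\delta-k+2)(k-1)/4\rfloor + 1$, contradicting the size of $|E'|$. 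The main technical obstacle is the arithmetic verification that $F(k,\delta)$ is large enough to push the right-hand side below $(\delta-k+2)(k-1)/4 + 1/4$ (the worst-case gap $\lfloor X/4\rfloor + 1 - X/4$, arising when $X=(\delta-k+2)(k-1)\equiv 3\pmod 4$); this is a routine but lengthy calculation tracking the size of $\eta$ and the $S_X$ correction, and it is exactly this calibration that dictates the explicit form of the polynomial $F(k,\delta)$.
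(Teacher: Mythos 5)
Your proposal is correct in substance but takes a genuinely different route from the paper. The paper's proof is a quadratic-form comparison: it evaluates $\langle Q(G)x,x\rangle-\langle Q(\overline{K_{\delta-k+2}}\cup K_{n-\delta+k-2})x,x\rangle$ with the Perron vector $x$ of $G$, bounds the contribution of the edges inside $X$ and between $X$ and $Y$ from above via Lemma \ref{lemma33}, bounds the contribution of the deleted edges $E'$ from below via Lemma \ref{lemma37}, and concludes by Rayleigh's principle against $q(\overline{K_{\delta-k+2}}\cup K_{n-\delta+k-2})=2(n-\delta+k-3)$ (this is where Lemma \ref{lemma32} enters, to guarantee $q(G)$ is already large enough for the error terms to be small). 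You instead sum the eigenvalue equation over $Y\cup Z$ to obtain the exact linear identity $\bigl(q(G)-2(n-\delta+k-3)\bigr)S_{YZ}=(\delta-k+2)S_Y+(k-1)S_X-2T$, which I have checked is correct for $G=A(n,k,\delta)-E'$, and then run a contradiction from $q(G)\ge 2(n-\delta+k-3)$; this is arguably cleaner (linear rather than quadratic in $x$, no reference graph, and the contradiction hypothesis replaces Lemma \ref{lemma32} as the source of the lower bound on $q(G)$ needed to make $\eta$ and the $S_X$ term small). Your identification of the worst-case integrality gap $\lfloor X/4\rfloor+1-X/4=1/4$ at $X\equiv 3\pmod 4$ is right, and a check of the leading terms shows $n\ge F(k,\delta)$ makes $X\eta$ roughly $1/2$ and the remaining corrections $O(1/\delta)$, so the deferred arithmetic does close; the paper is equally terse about its own final numerical verification. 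Two small points you should make explicit: (i) the degree formulas $d(v)=n-1-e(v)$, $d(v)=n-\delta+k-3-e(v)$ and the count of $k-1$ neighbours of each $X$-vertex in $Y\cup Z$ hold only for the edge-maximal graph $A(n,k,\delta)-E'$, so you must first reduce to the extremal $G$ (maximizing $q$ with $G[Y]$ edge-maximal), which is also required to invoke Lemma \ref{lemma37}; and (ii) $S_{YZ}>0$ by Perron--Frobenius on the connected graph $G$, which justifies dividing by it.
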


\begin{proof}
We assume $G\in  \mathcal{A}_2(n,k,\delta)$ such that $G$ has the largest $Q$-index $q(G)$ among $\mathcal{A}_2(n,k,\delta)$ and $G[Y]$ contains the largest number of edges.
Let $x$ be the eigenvector corresponding to $q(G),$ and $G'[X]$ be the complete graph $K_{\delta-k+2}$ induced by $X.$ Lemmas \ref{lemma33} and \ref{lemma37}   imply that
    \begin{eqnarray*}
    &&\langle Q(G)x,x\rangle-\langle Q(\overline{K_{\delta-k+2}}+K_{n-\delta+k-2})x,x\rangle \\
    &=& \sum_{i,j\in E(G'[X])}(x_i+x_j)^2+\sum_{i\in X,j\in Y}(x_i+x_j)^2-\sum_{{i,j}\in E'}(x_i+x_j)^2 \\
    &\leq & \frac{(\delta-k+2)(\delta-k+1)}{2} \left(2\frac{k-1}{q(G)-(2\delta-k+1)}\right)^2 \\
    && +(k-1)(\delta-k+2) \left(1+\frac{k-1}{q(G)-(2\delta-k+1)}\right)^2 \\
    && -4|E'|\left(1-\frac{(\delta-k+2)(k+3)+4}{2(q(G)-n+1)}\right)^2.
    \end{eqnarray*}
    As $|E'|= \left \lfloor\frac {(\delta-k+2)(k-1)}{4}\right \rfloor+1 \geq \frac{(\delta-k+2)(k-1)+1}{4},$ we have
    \begin{eqnarray*}
    &&\langle Q(G)x,x\rangle-\langle Q(\overline{K_{\delta-k+2}}+K_{n-\delta+k-2})x,x\rangle \\
    &\leq & \frac{(\delta-k+2)(\delta-k+1)}{2} \left(2\frac{k-1}{q(G)-(2\delta-k+1)}\right)^2 \\
    &&+(k-1)(\delta-k+2)\left(1+\frac{k-1}{q(G)-(2\delta-k+1)}\right)^2 \\
    &&-\left((k-1)(\delta-k+2)+1\right)\left(1-\frac{(\delta-k+2)(k+3)+4}{2(q(G)-n+1)}\right)^2.
    \end{eqnarray*}
    Since $n\geq F(k,\delta),$ and $q(G)>2(n-\delta+k-3)-1$ by Lemma \ref{lemma32}, we have
    $$\langle Q(G)x,x\rangle-\langle Q(\overline{K_{\delta-k+2}}+K_{n-\delta+k-2})x,x\rangle <0.$$
    According to the Rayleigh's principle,
    $$\frac{\langle Q(\overline{K_{\delta-k+2}}+K_{n-\delta+k-2})x,x\rangle}{\langle x,x\rangle}\leq q(\overline{K_{\delta-k+2}}+K_{n-\delta+k-2})=2(n-\delta+k-3).$$
    Therefore, we have $q(G)=\frac{\langle Q(G)x,x\rangle}{\langle x,x\rangle}<2(n-\delta+k-3).$
\end{proof}
Now we are ready to prove Theorem \ref{th15}.

\begin{proof}
By Lemma \ref{lemma22}, we have
$$2(n-\delta+k-3)\leq q(G) \leq \frac{2m}{n-1}+n-2. $$
Therefore
    \begin{eqnarray*}
    m& \geq & \frac{(n-2\delta+2k-4)(n-1)}{2} \\
    &=& \frac{n(n-1)}{2}-(\delta-k+3)(n-\delta-2)+n-\delta-2-(\delta-k+2)(\delta+1) \\
    &>& \frac{n(n-1)}{2}-(\delta-k+3)(n-\delta-2),
    \end{eqnarray*}
the last inequality holds as $n\geq F(k,\delta).$
By Lemma \ref{lemma23}, $G$ is $k$-connected unless $G \in {A}(n,k,\delta).$ Together with Lemmas~ \ref{lemma31} and \ref{lemma38}, the result follows.
\end{proof}


\frenchspacing

\end{document}